\newtheorem{theorem}{Theorem}
\newtheorem{definition}[theorem]{Definition}
\newtheorem{lemma}[theorem]{Lemma}
\newtheorem{remark}[theorem]{Remark}
\begin{document}

\title{On the Classification of $G$-Graded Twisted Algebras over Finite
Abelian Groups}
\author{Juan P. Hernandez, Juan D. Velez, Luis A. Wills-Toro, Edisson
Gallego.}
\date{}
\maketitle

\begin{abstract}
Let $G$ be a group and let $W$ be an algebra over a field $K$. We will say
that $W$ is a $G$-graded twisted algebra if $W$ can be written as $W=\oplus
_{g\in G}W_{g}$ with $W_{a}W_{b}\subset W_{ab}$, and where each $W_{g}$ is a
one dimensional $K$-vector space. It is also assumed that $W $ has no
monomial which is a zero divisor which means that for each pair of nonzero
elements $w_{a}\in W_{a}$, $w_{b}\in W_{b}$, $w_{a}\cdot w_{b}\neq 0$. We
also demand that $W$ has a multiplicative identity element. We focus in the
case where $G$ is a finite abelian group and $K=\mathbb{C}$ or $K=\mathbb{R}$%
.\newline
In this article, using methods of group cohomology, we classify all
associative $G$-graded twisted algebras in the case $G$ is a finite abelian
group. On the other hand, by generalizing some of the arguments developed in 
\cite{jwn} we present a classification of all $G$-graded twisted algebras
that satisfy certain symmetry condition.
\end{abstract}

\section{Introduction}

$G$-graded twisted algebras were introduced in \cite{tga}, and independently
in \cite{wt}, as distinguished mathematical structures which arise naturally
in theoretical physics \cite{w1}, \cite{w2}, \cite{w3}, \cite{w4}, \cite{w5}
and \cite{tga2}. A $G$-graded twisted algebra $W$ is an algebra over a
commutative ring $R$ with a $G$-grading, i.e., $W=\oplus _{g\in G}W_{g}$,
with $W_{a}W_{b}\subset W_{ab}$. Each $W_{g}$ is assumed to be a free $R$%
-module of rank one, and we demand that $W$ is free of zero monomial
divisors, i.e., $w_{a}\cdot w_{b}\neq 0$ for every non-zero elements $%
w_{a}\in W_{a} $, $w_{b}\in W_{b}$. We also demand that $W$ has an identity
element $1\in W_{e}$, where $W_{e}$ denotes the graded component
corresponding to the identity element $e\in G$.

A classification of all associative $G$-graded twisted algebras over the
real and complex numbers over finite cyclic groups is achieved in \cite{jwn}
by using standard techniques from group cohomology. In the first part of
this article we generalize the methods developed in \cite{jwn} in order to
provide a classification of all associative $G$-graded twisted algebras over
any finite abelian group. The exact number of distinct (graded) isomorphism
classes is given in Theorem \ref{tm} where a formula to count them is
presented.

The second part of this article deals with the classification problem for
non-associative $G$-graded twisted algebras that satisfy a particular type
of symmetry condition. In Theorem \ref{t3} we also provide an exact formula
to count (up to graded isomorphisms) all symmetric algebras that are graded
over an abelian group. This generalizes the main result obtained in \cite%
{jwn} for cyclic groups.

\section{Definitions and basic notions}

\subsection{$G$-graded twisted algebras over fields.}

\begin{definition}
Let $G$ denote a group. A $G$-graded twisted algebra $W$ is an algebra over
a commutative ring $R$ with a $G$-grading, i.e., $W=\oplus _{g\in G}W_{g}$,
with $W_{a}W_{b}\subset W_{ab}$. Each $W_{g}$ is assumed to be a free $R$%
-module of rank one, and we demand that $W$ is free of zero monomial
divisors, i.e., $w_{a}\cdot w_{b}\neq 0$ for every non-zero elements $%
w_{a}\in W_{a}$, $w_{b}\in W_{b}$. We also demand that $W$ has an identity
element $1\in W_{e}$, where $W_{e}$ denotes the graded component
corresponding to the identity element $e\in G$. In this article $R$\emph{\
always will be a field.}
\end{definition}

As each graded component $W_{g}$ is a vector space of dimension one, each
choice of a non zero element $w_{g}\in W_{g}$, for each $g\in G$, produces a
graded basis $\mathcal{B}=\{w_{g}:g\in G\}$ for $W$. For each such basis
there is a structure constant associated to it, $C_{\mathcal{B}}:G\times
G\rightarrow A$, defined by the identity: $w_{a}\cdot w_{b}=C_{\mathcal{B}%
}(a,b)w_{ab}$. Here $A\subset K^{\ast }$ must be a subgroup of the
multiplicative group of all nonzero elements of $K$, since $W$ has no zero
divisor monomials. From now on we will omit the subscript $\mathcal{B}$ if a
particular basis is clear in the context.

Given a structure constant $C:G\times G\rightarrow A$, we define two
functions $q:G\times G\rightarrow A$ and $r:G\times G\times G\rightarrow A$
as: 
\begin{equation}
\begin{aligned} q(a,b)& =C(a,b)C(b,a)^{-1}\\ r(a,b,c) & =
C(b,c)C(ab,c)^{-1}C(a,bc)C(a,b)^{-1} \end{aligned}  \label{e1}
\end{equation}%
The associativity of elements of $W$ can be described in terms of the
function $r:G\times G\times G\rightarrow A$ as follows: $w_{a}\cdot
(w_{b}\cdot w_{c})=r(a,b,c)(w_{a}\cdot w_{b})\cdot w_{c}$. When $G$ is an
abelian group, the commutativity of elements of $W$ is given in terms of the
function $q:G\times G\rightarrow A$ as $w_{a}\cdot w_{b}=q(a,b)w_{b}\cdot
w_{a}$.

\begin{definition}
A morphism between two $G$-graded twisted $K$-algebras $W=\oplus_{g \in G}
W_{g}$ and $V=\oplus_{g \in G} V_{g}$ is an unitarian homomorphism of $K$%
-algebras $\varphi: W \rightarrow V$. If the homomorphism preserves the
grading, i.e., $\varphi(W_{g}) \subset V_{g}$, we say the morphism is 
\textit{graded}.
\end{definition}

\subsection{Group cohomology.}

In this section we recall some basic definitions and results from the
cohomology of groups that will be needed in the next section. We adhere to
the terminology used in \cite{kb}.

Let $G$ be a group. A $G$-module $M$ is an abelian group with an action
which is compatible with the abelian group structure of $M$. Every abelian
group $M$ may be regarded as a $G$-module with the trivial action. Given a
group $G$ and $A,$ an abelian group, the cohomology of $G$ with coefficients
in $A$, denoted by $H^{\bullet }(G,A)$, may be defined as $\text{Ext}_{%
\mathbb{Z}[G]}^{\bullet }(\mathbb{Z},A)$. A very useful way to compute $%
H^{\bullet }(G,A)$ is the following: Define $C^{n}(G,A)$ as the set of
functions from $G^{n}=$ the product of $n$ copies of $G$, to $A$, and
consider $\partial ^{n}:C^{n}(G,A)\rightarrow C^{n+1}(G,A)$, the boundary
map given by the formula: 
\begin{equation*}
\partial ^{n}(f)(g_{1},\dots ,g_{n+1})=g_{1}f(g_{2},\dots
,g_{n+1})+\sum_{i=1}^{n}(-1)^{i}f(g_{1},\dots ,g_{i-1},g_{i}g_{i+1},\dots
,g_{n+1})+f(g_{1},\dots ,g_{n}).
\end{equation*}%
Then, the cohomology $H^{\bullet }(G,A)$ turns out to be the homology of the
complex: 
\begin{equation*}
\xymatrix{\cdots \ar[r] & C^{n-1}(G,A) \ar[r]^{\partial^{n-1}} & C^{n}(G,A)
\ar[r]^{\partial^{n}} & C^{n+1}(G,A) \ar[r] & \cdots },
\end{equation*}%
(see \cite{kb}, page 19).

\section{Computation of $H^{2}(G,A)$ when $G$ is a finite abelian group.}

When $G$ is a finite cyclic group there is a particular free resolution for $%
\mathbb{Z}$, regarded as a $\mathbb{Z}[G]$-modules with the trivial action.
If we denote $\mathbb{Z}[t]$ as the quotient $\frac{\mathbb{Z}[T]}{(T^{n}-1)}
$, and let $N $ be the sum $1+t+t^{2}+\cdots +t^{n-1}$, then clearly $%
\mathbb{Z}\cong \frac{\mathbb{Z}[t]}{(t-1)},$ and the following complex is
exact: 
\begin{equation}
\xymatrix{\cdots \ar[r]^{N} & \mathbb{Z}[t] \ar[r]^{t-1} & \mathbb{Z}[t]
\ar[r]^{N} & \mathbb{Z}[t] \ar[r]^{t-1} & \mathbb{Z}[t] \ar[r]^{\pi} &
\mathbb{Z} \ar[r] & 0}
\end{equation}%
By suppressing $\mathbb{Z},$ and then applying the contravariant functor $%
\text{Hom}_{\mathbb{Z}[t]}(-,A),$ where $A$ is regarded as a $G$-module with
the trivial action, we obtain the following commutative diagram: 
\begin{equation}
\xymatrix{0 \ar[r] & \textrm{Hom}_{\mathbb{Z}[t]}(\mathbb{Z}[t], A)
\ar[r]^{t-1} \ar[d]_{\lambda_{0}} &
\textrm{Hom}_{\mathbb{Z}[t]}(\mathbb{Z}[t], A) \ar[r]^{N}
\ar[d]_{\lambda_{1}} & \textrm{Hom}_{\mathbb{Z}[t]}(\mathbb{Z}[t], A)
\ar[r]^{t-1} \ar[d]_{\lambda_{2}} &
\textrm{Hom}_{\mathbb{Z}[t]}(\mathbb{Z}[t], A) \ar[r] \ar[d]_{\lambda_{3}} &
\cdots \\ 0 \ar[r] & A \ar[r]^{D^{0}} & A \ar[r]^{D^{1}} & A \ar[r]^{D^{2}}
& A \ar[r] & \cdots}  \label{e7}
\end{equation}%
The map $\lambda _{i}:\text{Hom}_{\mathbb{Z}[t]}(\mathbb{Z}[t],A)\rightarrow
A$ is the isomorphism given by $\lambda _{i}(\varphi )=\varphi (1)$, with
inverse $\lambda _{i}^{-1}:A\rightarrow \text{Hom}_{\mathbb{Z}[t]}(\mathbb{Z}%
[t],A)$ given by $\lambda _{i}^{-1}(a)=\varphi _{a}$, where $\varphi
_{a}(1)=a$ and $D^{j}=\lambda _{j+1}\circ \delta _{j}\circ \lambda _{j}^{-1}$%
. Here, $\delta _{0}=t-1,$ $\delta _{1}=N,$ $\delta _{2}=t-1,$ and $\delta
_{3}=N$. Thus, 
\begin{equation*}
D^{1}(a)=N\varphi _{a}(1)=N\cdot a=na,\text{ and }D^{2}(a)=(t-1)\varphi
_{a}(1)=(t-1)\cdot a=0.
\end{equation*}%
Therefore, by taking the second homology we obtain $H^{2}(G,A)\cong $ ker$%
(D^{2})/$im$(D^{1})=A/nA.$

We will need the following lemma:

\begin{lemma}
If $\xymatrix{F_{\bullet} \ar[r]^{\delta_{\bullet}} & \mathbb{Z}
\ar[r] & 0}$ is a free resolution of $\mathbb{Z}[G_{1}]$-modules and $%
\xymatrix{F'_{\bullet} \ar[r]^{\delta'_{\bullet}} &
\mathbb{Z} \ar[r] & 0}$ is a free resolution of $\mathbb{Z}[G_{2}]$-modules,
then the tensor product of these two resolutions $H_{\bullet }\overset{%
D_{_{\bullet }}}{\rightarrow }\mathbb{Z}\rightarrow 0$ is a free resolution
of $\mathbb{Z}[G_{1}\times G_{2}]$-modules, where $H_{n}=%
\bigoplus_{i=0}^{n}F_{n-i}\otimes F_{i}^{\prime }$, and for each $a\otimes
b\in F_{n-i}\otimes F_{i}^{\prime },$ the boundary homomorphisms are defined
as 
\begin{equation*}
D_{n}(a\otimes b)=\delta _{n-i}(a)\otimes b+(-1)^{n-i}a\otimes \delta
_{i}^{\prime }(b).
\end{equation*}
(See \cite{kb}, page 107.)
\end{lemma}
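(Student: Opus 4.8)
The plan is to verify, for the total complex $H_{\bullet}$, the three defining properties of a free resolution: that each $H_{n}$ is a free $\mathbb{Z}[G_{1}\times G_{2}]$-module, that $D_{n-1}\circ D_{n}=0$ (so that $H_{\bullet}$ is a complex), and that the augmented sequence $\cdots \to H_{1}\xrightarrow{D_{1}}H_{0}\xrightarrow{D}\mathbb{Z}\to 0$ is exact, where $D=\varepsilon\otimes\varepsilon'$ is the tensor product of the two augmentations $\varepsilon\colon F_{0}\to\mathbb{Z}$ and $\varepsilon'\colon F'_{0}\to\mathbb{Z}$.

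For freeness I would use the canonical ring isomorphism $\mathbb{Z}[G_{1}\times G_{2}]\cong\mathbb{Z}[G_{1}]\otimes_{\mathbb{Z}}\mathbb{Z}[G_{2}]$ and equip $F_{n-i}\otimes_{\mathbb{Z}}F'_{i}$ with the diagonal module structure $(g_{1},g_{2})\cdot(a\otimes b)=(g_{1}a)\otimes(g_{2}b)$. Writing $F_{n-i}=\bigoplus_{\alpha}\mathbb{Z}[G_{1}]e_{\alpha}$ and $F'_{i}=\bigoplus_{\beta}\mathbb{Z}[G_{2}]f_{\beta}$, one obtains $F_{n-i}\otimes_{\mathbb{Z}}F'_{i}\cong\bigoplus_{\alpha,\beta}\mathbb{Z}[G_{1}\times G_{2}](e_{\alpha}\otimes f_{\beta})$, a free module, and hence $H_{n}$, being a finite direct sum of such, is free. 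Since $\delta_{n-i}$ is $\mathbb{Z}[G_{1}]$-linear and $\delta'_{i}$ is $\mathbb{Z}[G_{2}]$-linear, both $a\otimes b\mapsto\delta_{n-i}(a)\otimes b$ and $a\otimes b\mapsto a\otimes\delta'_{i}(b)$ are $\mathbb{Z}[G_{1}\times G_{2}]$-linear, so $D_{n}$ is as well. The relation $D_{n-1}\circ D_{n}=0$ is then a short sign computation on a generator $a\otimes b\in F_{n-i}\otimes F'_{i}$: the two "pure" terms vanish because $\delta^{2}=0={\delta'}^{2}$, while the two mixed terms equal $(-1)^{n-1-i}\delta(a)\otimes\delta'(b)$ and $(-1)^{n-i}\delta(a)\otimes\delta'(b)$, whose sum is $0$; the sign $(-1)^{n-i}$ built into $D_{n}$ is precisely the usual total-complex sign that forces this cancellation.

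The one step with genuine content is exactness, which I would settle by the Künneth theorem. Each $F_{k}$ is a free, hence flat, abelian group (because $\mathbb{Z}[G_{i}]$ is $\mathbb{Z}$-free), and the hypothesis that $F_{\bullet}\to\mathbb{Z}$ and $F'_{\bullet}\to\mathbb{Z}$ are resolutions says their homologies are $\mathbb{Z}$ in degree $0$ and $0$ in positive degrees. Since $H_{\bullet}$ is exactly the total complex $\operatorname{Tot}(F_{\bullet}\otimes_{\mathbb{Z}}F'_{\bullet})$, the Künneth formula expresses its degree-$n$ homology as a direct sum of terms $H_{p}\otimes_{\mathbb{Z}}H_{q}$ with $p+q=n$ together with terms $\operatorname{Tor}^{\mathbb{Z}}_{1}(H_{p},H_{q})$ with $p+q=n-1$, where $H_{p}$ and $H_{q}$ denote the homologies of $F_{\bullet}$ and $F'_{\bullet}$; only $p=q=0$ contributes, the $\operatorname{Tor}$ term vanishes because $\mathbb{Z}$ is $\mathbb{Z}$-flat, so $H_{\bullet}$ has zero homology in positive degrees and homology $\mathbb{Z}\otimes_{\mathbb{Z}}\mathbb{Z}=\mathbb{Z}$ in degree $0$, with the isomorphism induced by $\varepsilon\otimes\varepsilon'$. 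This is exactly the asserted exactness. (If one prefers not to cite Künneth, the same conclusion follows from the acyclic-assembly lemma: flatness of each $F_{k}$ makes every row $F_{k}\otimes_{\mathbb{Z}}(\cdots\to F'_{1}\to F'_{0}\to\mathbb{Z}\to 0)$ exact, and a bounded-below double complex with exact rows has exact total complex.) I expect the only spots needing care to be the sign bookkeeping in $D_{n-1}\circ D_{n}=0$ and checking that $H_{\bullet}$ is literally the total complex to which Künneth applies; everything else is routine.
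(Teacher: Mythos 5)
Your proof is correct: the freeness argument via $\mathbb{Z}[G_{1}\times G_{2}]\cong\mathbb{Z}[G_{1}]\otimes_{\mathbb{Z}}\mathbb{Z}[G_{2}]$, the sign computation showing $D_{n-1}\circ D_{n}=0$, and the exactness argument via K\"unneth (using that each $F_{k}$ is $\mathbb{Z}$-free, hence flat, with cycles again free over $\mathbb{Z}$) are all sound. The paper itself gives no proof --- it simply cites Brown, page 107 --- and the argument you give is essentially the standard one found there, so there is nothing to compare beyond noting that your write-up supplies the details the paper omits.
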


Now we are ready to prove the main result of this section.

\begin{theorem}
\label{tm} Let $G$ be a finite abelian group, presented as $G=G_{1}\times
\cdots \times G_{k}$, where $G_{i}$ is a cyclic group of order $n_{i}$. Let $%
A$ be any abelian group. Then 
\begin{equation*}
H^{2}(G,A)\cong \left( \bigoplus_{i=0}^{k}\frac{A}{n_{i}A}\right) \bigoplus
\left( \bigoplus_{1\leq i<j\leq k}\text{Ann}_{A}(n_{i})\cap \text{Ann}%
_{A}(n_{j})\right) .
\end{equation*}
\end{theorem}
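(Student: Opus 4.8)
The plan is to compute $H^2(G,A)$ by induction on $k$, using the Lemma on tensor products of free resolutions to build an explicit free resolution of $\mathbb{Z}$ over $\mathbb{Z}[G]$ out of the standard periodic resolutions over each $\mathbb{Z}[G_i]$. For a single cyclic factor $G_i$ of order $n_i$ we already have the $2$-periodic resolution $\cdots \to \mathbb{Z}[t_i] \xrightarrow{N_i} \mathbb{Z}[t_i] \xrightarrow{t_i-1} \mathbb{Z}[t_i] \xrightarrow{\pi} \mathbb{Z} \to 0$ displayed in the excerpt, and the computation there shows that applying $\mathrm{Hom}_{\mathbb{Z}[t_i]}(-,A)$ yields cochain groups all equal to $A$ with differentials alternating between multiplication by $n_i$ and the zero map. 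I would first treat $k=2$ in full detail, tensoring the resolution for $G_1$ with the one for $G_2$: the degree-$n$ term of the total complex is $H_n=\bigoplus_{i=0}^{n} F_{n-i}\otimes F'_i$, so $H_0, H_1, H_2$ are free $\mathbb{Z}[G_1\times G_2]$-modules of ranks $1,2,3$ respectively, and after applying $\mathrm{Hom}_{\mathbb{Z}[G_1\times G_2]}(-,A)$ and using $\mathrm{Hom}(\mathbb{Z}[G],A)\cong A$, the cochain complex in low degrees becomes $A \to A^2 \to A^3 \to \cdots$ with differentials assembled from the $D_n$ formula in the Lemma.

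The core of the argument is then the explicit linear algebra over $\mathbb{Z}$: writing the coboundary $\delta^1\colon A^2\to A^3$ and $\delta^0\colon A\to A^2$ in block/matrix form using $D_n(a\otimes b)=\delta_{n-i}(a)\otimes b+(-1)^{n-i}a\otimes\delta'_i(b)$, one finds that each differential entry is either multiplication by some $n_i$, multiplication by $0$, or $\pm$ an identity map (the latter coming from $t_i-1$ terms, since after dualizing these become the zero map on $A$, while the norm maps $N_i$ become multiplication by $n_i$ — I will need to be careful about exactly which entries survive). Computing $\ker(\delta^2)/\mathrm{im}(\delta^1)$ then produces, for the $k=2$ case, a summand $\tfrac{A}{n_1 A}$ from the ``pure $G_1$'' part of $H^2$, a summand $\tfrac{A}{n_2 A}$ from the ``pure $G_2$'' part, and a mixed summand coming from the middle term $F_1\otimes F'_1$ of $H_2$, which is cut out by the conditions $n_1 a = 0$ and $n_2 a = 0$, i.e. $\mathrm{Ann}_A(n_1)\cap\mathrm{Ann}_A(n_2)$. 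This matches the claimed formula when $k=2$ (noting that the displayed index $\bigoplus_{i=0}^k$ should be read as $\bigoplus_{i=1}^k$).

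For the inductive step from $k-1$ to $k$, I would set $G=H\times G_k$ with $H=G_1\times\cdots\times G_{k-1}$, tensor a free resolution of $\mathbb{Z}$ over $\mathbb{Z}[H]$ realizing the inductive hypothesis with the $2$-periodic resolution over $\mathbb{Z}[G_k]$, and again read off $H^2$ from the total complex in degrees $\le 2$. The new ``pure $G_k$'' summand $\tfrac{A}{n_k A}$ appears from $F_0\otimes F'_2$; the old summands $\bigoplus_{i=1}^{k-1}\tfrac{A}{n_i A}$ and $\bigoplus_{1\le i<j\le k-1}\mathrm{Ann}_A(n_i)\cap\mathrm{Ann}_A(n_j)$ persist from $F_2\otimes F'_0$; and the cross terms $F_1\otimes F'_1$, where $F_1$ in the $H$-resolution is itself a sum of $k-1$ free modules, contribute $\bigoplus_{i=1}^{k-1}\mathrm{Ann}_A(n_i)\cap\mathrm{Ann}_A(n_k)$, completing the double sum. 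The main obstacle I anticipate is bookkeeping: keeping the direct-sum decomposition of $H_n$ indexed cleanly, tracking the signs $(-1)^{n-i}$ in $D_n$ (which affect the maps but not the isomorphism type of the subquotients), and verifying that no unexpected cancellation occurs between the ``pure'' and ``mixed'' blocks — i.e. that the relevant matrix really is block-diagonal enough that $\ker/\mathrm{im}$ splits as the claimed direct sum. A clean way to control this is to observe that the total complex is filtered (a double complex), so there is a spectral sequence whose $E_2$ page already is $\bigoplus_{p+q=n} H^p(G_k, H^q(H,A))$; since the $H^q(G_k,-)$ of the relevant trivial modules are either the module itself (for $q=0$) or an $n_k$-torsion/quotient group, and the spectral sequence degenerates for degree reasons in total degree $\le 2$, this gives the result with minimal computation, though I would still present the explicit resolution version for concreteness.
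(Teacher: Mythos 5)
Your plan follows essentially the same route as the paper: build a free resolution of $\mathbb{Z}$ over $\mathbb{Z}[G]$ by tensoring the $2$-periodic resolutions of the cyclic factors (the quoted Lemma), induct on the number of factors via $G=H\times G_k$, dualize so that $t_i-1\mapsto 0$ and $N_i\mapsto n_i$, and read off $\ker/\mathrm{im}$ blockwise, with the pure blocks giving $A/n_iA$ and the $F_1\otimes F_1'$ cross terms giving $\mathrm{Ann}_A(n_i)\cap\mathrm{Ann}_A(n_k)$ --- exactly the paper's recursion (and you are right that the $\bigoplus_{i=0}^{k}$ in the statement should read $\bigoplus_{i=1}^{k}$). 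The only caution concerns your optional spectral-sequence aside: degeneration at $E_2$ in total degree $2$ is not purely ``for degree reasons'' (e.g.\ $d_2\colon E_2^{0,1}\to E_2^{2,0}$ lands in total degree $2$), but rather follows from the splitting of the projections $G\to G_i$; your primary explicit-resolution argument does not depend on this and is the one the paper carries out.
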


\begin{proof}
By the lemma above, the tensor product of the free resolutions of $\mathbb{Z}
$, regarded as a $\mathbb{Z}[G_{i}]$-module, gives a free resolution of $%
\mathbb{Z}$ as a $\mathbb{Z}[G]$-module. On the other hand, $\mathbb{Z}%
[G]\cong \mathbb{Z}[G_{1}]\times \cdots \times \mathbb{Z}[G_{k}]$. This
product in turn is isomorphic to $\mathbb{Z}[T_{1},\dots
,T_{k}]/(T^{n_{1}}-1,\dots ,T^{n_{k}}-1)=\mathbb{Z}[t_{1},\dots ,t_{k}]$,
where $t_{i}$ denotes the class of $T_{i}$.\

We use induction on $k$: The previous lemma gives the following free
resolution of $\mathbb{Z}$ as a $\mathbb{Z}[G_{1}\times \cdots \times
G_{k-1}]$-module:
\begin{equation}
\xymatrix{& F_{3} \ar@{=}[d] & F_{2} \ar@{=}[d] & F_{1} \ar@{=}[d] & F_{0}
\ar@{=}[d] & &\\ \ar[r] & \mathbb{Z}[t_{1}, \dots, t_{k-1}]^{a_{k-1}}
\ar[r]^{\Delta_{2}^{k-1}} & \mathbb{Z}[t_{1}, \dots, t_{k-1}]^{b_{k-1}}
\ar[r]^{\Delta_{1}^{k-1}} & \mathbb{Z}[t_{1}, \dots, t_{k-1}]^{c_{k-1}}
\ar[r]^{\Delta_{0}^{k-1}} & \mathbb{Z}[t_{1}, \dots, t_{k-1}] \ar[r] &
\mathbb{Z} \ar[r] & 0}  \label{e18}
\end{equation}%
By tensoring with the following free resolution of $\mathbb{Z}$ as a $%
\mathbb{Z}[G_{k}]$-module
\begin{equation}
\xymatrix{& F'_{3} \ar@{=}[d] & F'_{2} \ar@{=}[d] & F'_{1} \ar@{=}[d] &
F'_{0} \ar@{=}[d] & &\\ \ar[r] & \mathbb{Z}[t_{k}] \ar[r]^{\delta_{2}^{k}} &
\mathbb{Z}[t_{k}] \ar[r]^{\delta_{1}^{k}} & \mathbb{Z}[t_{k}]
\ar[r]^{\delta_{0}^{k}} & \mathbb{Z}[t_{k}] \ar[r] & \mathbb{Z} \ar[r] & 0,}
\label{e19}
\end{equation}%
where $\delta _{0}^{k}=t_{k}-1$, and $\delta _{1}^{k}=N_{k}=1+t_{k}+\cdots
+t_{k}^{n_{k}-1}$, we obtain a free resolution of $\mathbb{Z}$ as a $\mathbb{%
Z}[G_{1}\times \cdots \times G_{k}]$-module:
\begin{equation}
\xymatrix{ \ar[r] & \mathbb{Z}[t_{1}, \dots, t_{k}]^{a_{k}}
\ar[r]^{\Delta_{2}^{k}} & \mathbb{Z}[t_{1}, \dots, t_{k}]^{b_{k}}
\ar[r]^{\Delta_{1}^{k}} & \mathbb{Z}[t_{1}, \dots, t_{k}]^{c_{k}}
\ar[r]^{\Delta_{0}^{k}} & \mathbb{Z}[t_{1}, \dots, t_{k}] \ar[r] &
\mathbb{Z} \ar[r] & 0.}  \label{e20}
\end{equation}%
Here,
\begin{equation*}
\mathbb{Z}[t_{1},\dots ,t_{k}]^{a_{k}}=\bigoplus_{i=0}^{3}F_{3-i}\otimes
F_{i}^{\prime }=\mathbb{Z}[t_{1},\dots ,t_{k}]^{a_{k-1}}\bigoplus \mathbb{Z}%
[t_{1},\dots ,t_{k}]^{b_{k-1}}\bigoplus \mathbb{Z}[t_{1},\dots
,t_{k}]^{c_{k-1}}\bigoplus \mathbb{Z}[t_{1},\dots ,t_{k}],
\end{equation*}%
\begin{equation*}
\mathbb{Z}[t_{1},\dots ,t_{k}]^{b_{k}}=\bigoplus_{i=0}^{2}F_{2-i}\otimes
F_{i}^{\prime }=\mathbb{Z}[t_{1},\dots ,t_{k}]^{b_{k-1}}\bigoplus \mathbb{Z}%
[t_{1},\dots ,t_{k}]^{c_{k-1}}\bigoplus \mathbb{Z}[t_{1},\dots ,t_{k}],
\end{equation*}%
and
\begin{equation*}
\mathbb{Z}[t_{1},\dots ,t_{k}]^{c_{k}}=\bigoplus_{i=0}^{1}F_{1-i}\otimes
F_{i}^{\prime }=\mathbb{Z}[t_{1},\dots ,t_{k}]^{c_{k-1}}\bigoplus \mathbb{Z}%
[t_{1},\dots ,t_{k}].
\end{equation*}%
Therefore,
\begin{eqnarray*}
a_{k} &=&a_{k-1}+b_{k-1}+c_{k-1}+1=a_{k-1}+\binom{k}{2}+(k-1)+1, \\
b_{k} &=&b_{k-1}+c_{k-1}+1=\binom{k}{2}+(k-1)+1=\binom{k+1}{2},\text{ and} \\
c_{k} &=&c_{k-1}+1=(k-1)+1=k.
\end{eqnarray*}%
Furthermore, we also get recursive equations for the boundary maps $\Delta
_{0}^{k},\Delta _{1}^{k}$ and $\Delta _{2}^{k}$:
\begin{equation*}
\Delta _{0}^{k}((p_{1},\dots ,p_{k-1}),q)=\Delta _{0}^{k-1}(p_{1},\dots
,p_{k-1})+\delta _{0}^{k}(q),
\end{equation*}%
\begin{eqnarray*}
&&\Delta _{1}^{k}((p_{1},\dots ,p_{\binom{k}{2}}),(q_{1},\dots ,q_{k-1}),r)
\\
&=&(\Delta _{1}^{k-1}(p_{1},\dots ,p_{\binom{k}{2}})-\delta
_{0}^{k}(q_{1},\dots ,q_{k-1}),\Delta _{0}^{k-1}(q_{1},\dots
,q_{k-1})+\delta _{1}^{k}(r)),
\end{eqnarray*}%
\begin{equation}
\begin{aligned}& \Delta_{2}^{k}((p_{1}, \dots , p_{a_{k-1}}),(q_{1}, \dots,
q_{\binom{k}{2}}),(r_{1}, \dots, r_{k-1}),l) =\\ & =(\Delta_{2}^{k-1}(p_{1},
\dots, p_{a_{k-1}})+\delta_{0}^{k}(q_{1}, \dots,
q_{\binom{k}{2}}),\Delta_{1}^{k-1}(q_{1}, \dots,
q_{\binom{k}{2}})-\delta_{1}^{k}(r_{1}, \dots, r_{k-1}),
\Delta_{0}^{k-1}(r_{1}, \dots, r_{k-1})+\delta_{2}^{k}(l))\end{aligned}
\label{e21}
\end{equation}%
where $\delta _{i}^{k}(a_{1},\dots ,a_{n})=(\delta _{i}^{k}(a_{1}),\dots
,\delta _{i}^{k}(a_{n}))$. Suppressing $\mathbb{Z}$, and then applying the
functor $\text{Hom}_{\mathbb{Z}[G]}(\_,A)$ in (\ref{e20}), we get a complex
which is isomorphic to the complex:
\begin{equation*}
\xymatrix{0 \ar[r] & A \ar[r]^{D^{0}_{k}} & A^{k-1} \bigoplus A
\ar[r]^{D^{1}_{k}} & A^{\binom{k}{2}} \bigoplus A^{k-1} \bigoplus A
\ar[r]^{D^{2}_{k}} & A^{a_{k-1}} \bigoplus A^{\binom{k}{2}} \bigoplus
A^{k-1} \bigoplus A \ar[r] & \cdots},
\end{equation*}%
where the boundary maps $D_{k}^{1}$ and $D_{k}^{2}$ are also given
recursively by:
\begin{equation*}
\begin{aligned} & D^{1}_{k}((x_{1}, \dots, x_{k-1}),y) = (D^{1}_{k-1}(x_{1},
\dots, x_{k-1}),0,0, \dots ,0, n_{k} \cdot y) \in A^{\binom{k}{2}} \oplus
A^{k-1} \oplus A\\ & =
(n_{1}x_{1},0,n_{2}x_{2},0,0,n_{3}x_{3},0,0,0,n_{4}x_{4},0,0,0,0,n_{5}x_{5},0,0,0,0,0,n_{6}x_{6}, \dots, n_{k-1}x_{k-1},0,0,\dots, 0, n_{k}y). \end{aligned}%
\end{equation*}%
And by
\begin{equation*}
\begin{aligned} & D^{2}_{k}((x_{1}, \dots, x_{b_{k-1}}),(y_{1}, \dots,
y_{k-1}),z) = \\ & = (D^{2}_{k-1}(x_{1}, \dots,
x_{b_{k-1}}),D^{1}_{k-1}(y_{1}, \dots, y_{k-1}),-n_{k}y_{1}, \dots,
-n_{k}y_{k-1},0) \in A^{a_{k-1}} \oplus A^{\binom{k}{2}} \oplus A^{k-1}
\oplus A\\ & =(D_{k-1}^{2}(x_{1}, \dots,
x_{b_{k-1}}),n_{1}y_{1},0,n_{2}y_{2},0,0,n_{3}y_{3},0,0,0,n_{4}y_{4},0,0,0,0,n_{5}y_{5},0,0,0,0,0,n_{6}y_{6}, \dots,\\ & \dots, n_{k-2}y_{k-2},0,0,\dots, 0, n_{k-1}y_{k-1}, -n_{k}y_{1}, \dots, -n_{k}y_{k-1},0). \end{aligned}
\end{equation*}%
Therefore, $\mathrm{im}(D_{k}^{1})=\mathrm{im}(D_{k-1}^{1})\oplus 0\oplus
0\oplus \cdots \oplus 0\oplus n_{k}A,$ and
\begin{equation*}
\text{ker}(D_{k}^{2})=\text{ker}(D_{k-1}^{2})\oplus \text{Ann}%
_{A}(n_{1})\cap \text{Ann}_{A}(n_{k})\oplus \cdots \oplus \text{Ann}%
_{A}(n_{k-1})\cap \text{Ann}_{A}(n_{k})\oplus A.
\end{equation*}%
Hence, we obtain the following recursive formula:
\begin{equation*}
H^{2}(G,A)\cong \frac{\text{ker}(D_{k-1}^{2})}{\text{im}(D_{k-1}^{1})}\oplus
\text{Ann}_{A}(n_{1})\cap \text{Ann}_{A}(n_{k})\oplus \cdots \oplus \text{Ann%
}_{A}(n_{k-1})\cap \text{Ann}_{A}(n_{k})\oplus \frac{A}{n_{k}A}.
\end{equation*}%
By induction we know that
\begin{equation*}
H^{2}(L,A)\cong \left( \bigoplus_{i=1}^{k-1}\frac{A}{n_{i}A}\right)
\bigoplus \left( \bigoplus_{1\leq i<j\leq k-1}\text{Ann}_{A}(n_{i})\cap
\text{Ann}_{A}(n_{j})\right) ,
\end{equation*}%
where $L=G_{1}\times \cdots \times G_{k-1}$. Since ker$(D_{k-1}^{2})/$im$%
(D_{k-1}^{1})=H^{2}(L,A)$, it follows that
\begin{equation*}
H^{2}(G,A)\cong \left( \bigoplus_{i=1}^{k}\frac{A}{n_{i}A}\right) \bigoplus
\left( \bigoplus_{1\leq i<j\leq k}\text{Ann}_{A}(n_{i})\cap \text{Ann}%
_{A}(n_{j})\right) .
\end{equation*}
\end{proof}

\section{Classification of associative $G$-graded twisted $K$-algebras, when 
$G$ is a finite abelian group and $K=\mathbb{C}$ or $K=\mathbb{R}.$}

We start by recalling the following theorem that was proved in \cite{jwn}.

\begin{theorem}
\label{ti} Let $W=\oplus _{g\in G}W_{g}$ and $V=\oplus _{g\in G}V_{g}$ be $G$%
-graded twisted $K$-algebras with fixed bases $\mathcal{B}$ and $\mathcal{B}%
^{\prime }$ respectively, and let $C_{1},C_{2}$ the corresponding structure
constants. Then $W$ is graded-isomorphic to $V$ if and only if the function $%
C_{1}C_{2}^{-1}$ is in the kernel of $d^{2}:C^{2}(G,K^{\ast })\rightarrow
C^{3}(G,K^{\ast })$ and the class $[C_{1}C_{2}^{-1}]$ is trivial in $%
H^{2}(G,K^{\ast })$ (\cite{jwn}, Page 4).
\end{theorem}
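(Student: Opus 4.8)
The plan is to convert ``graded isomorphism'' directly into a statement about $1$-cochains and to recognise the resulting equation as the coboundary formula; associativity of $W$ or $V$ will not be used anywhere. First I would note that a graded $K$-linear map $\varphi\colon W\to V$ is completely determined by its values on a graded basis: since $\dim_K W_g=\dim_K V_g=1$, necessarily $\varphi(w_g)=\mu(g)\,v_g$ for a unique function $\mu\colon G\to K$, and such a $\varphi$ is bijective exactly when $\mu(g)\ne 0$ for every $g$, i.e.\ when $\mu\in C^{1}(G,K^{\ast})$. Thus the datum of a graded isomorphism $W\to V$ is precisely a $1$-cochain $\mu$ for which the associated $\varphi$ happens to be a unitarian algebra homomorphism.

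Second, I would spell out what multiplicativity of $\varphi$ means in terms of the structure constants. Applying $\varphi$ to $w_a w_b=C_1(a,b)w_{ab}$ and comparing with $\varphi(w_a)\varphi(w_b)=\mu(a)\mu(b)C_2(a,b)v_{ab}$, and using $v_{ab}\ne 0$ and $C_2(a,b)\in K^{\ast}$, one obtains
\[
\frac{C_1(a,b)}{C_2(a,b)}=\frac{\mu(a)\mu(b)}{\mu(ab)}\qquad\text{for all }a,b\in G .
\]
The right-hand side is exactly the value at $(a,b)$ of $d^{1}\mu$ for the trivial $G$-action, with $K^{\ast}$ written multiplicatively; hence $\varphi$ is a graded \emph{algebra} homomorphism if and only if $C_1C_2^{-1}=d^{1}\mu$. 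I would then observe that the unitarian condition is automatic under this hypothesis: from $1_W\cdot 1_W=1_W$ one gets $1_W=C_1(e,e)^{-1}w_e$ and likewise $1_V=C_2(e,e)^{-1}v_e$, so $\varphi(1_W)=1_V$ amounts to $\mu(e)=C_1(e,e)/C_2(e,e)$, which is exactly what the displayed identity gives when evaluated at $(a,b)=(e,e)$.

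Third, I would assemble the two implications. If $W$ is graded-isomorphic to $V$, fix such an isomorphism, read off $\mu\in C^{1}(G,K^{\ast})$ as above, and conclude $C_1C_2^{-1}=d^{1}\mu\in\operatorname{im}d^{1}\subseteq\ker d^{2}$; in particular $C_1C_2^{-1}\in\ker d^{2}$ and its class in $H^{2}(G,K^{\ast})=\ker d^{2}/\operatorname{im}d^{1}$ is trivial. Conversely, if $C_1C_2^{-1}\in\ker d^{2}$ and $[C_1C_2^{-1}]=0$, then by definition $C_1C_2^{-1}\in\operatorname{im}d^{1}$, say $C_1C_2^{-1}=d^{1}\mu$; define $\varphi(w_g)=\mu(g)v_g$ and extend $K$-linearly. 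By the second step $\varphi$ is a unitarian algebra homomorphism, and since every $\mu(g)$ is nonzero it carries a basis of $W$ to a basis of $V$, hence is a graded isomorphism.

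The difficulty here is not conceptual but bookkeeping: one must read the boundary maps of the cochain complex recalled earlier multiplicatively on $A=K^{\ast}$, so that $d^{1}\mu(a,b)=\mu(a)\mu(b)\mu(ab)^{-1}$ and, for a structure constant $C$, $d^{2}C$ reproduces the function $r$ of (\ref{e1}); it is also worth flagging that the two stated hypotheses are not independent, since $\operatorname{im}d^{1}\subseteq\ker d^{2}$ always, so the actual content is just that $C_1C_2^{-1}$ is a $2$-coboundary. A minor point to handle with care in the write-up is that the chosen basis vector $w_e$ need not equal $1_W$, which is why the unitarian verification goes through $C_1(e,e)$ rather than assuming $w_e=1_W$.
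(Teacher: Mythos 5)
Your argument is correct and complete: identifying a graded isomorphism with a nowhere-zero function $\mu\colon G\to K^{\ast}$, translating multiplicativity into $C_1C_2^{-1}=d^{1}\mu$, and reading off both directions from $\operatorname{im}d^{1}\subseteq\ker d^{2}$ is exactly the standard coboundary argument, and your observations that the two stated hypotheses are redundant and that the unitarian condition is automatic are accurate. The paper itself does not reprove Theorem~\ref{ti} but only cites \cite{jwn}; your write-up supplies precisely the proof that reference is reported to contain, so there is nothing to correct.
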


The above theorem implies that two associative $G$-graded twisted algebras $%
W_{1}$ and $W_{2}$ are isomorphic as graded algebras if and only if $%
[C_{1}]=[C_{2}]$ in $H^{2}(G,K^{\ast })$, where $C_{1},C_{2}:G\times
G\rightarrow K^{\ast }$ denote the structure constants for $W_{1}$ and $%
W_{2} $, respectively, and where $K^{\ast }$ is viewed as a trivial $G$%
-module. Hence, in this case the number of non-isomorphic associative $G$%
-graded twisted algebras is given by the cardinality of $H^{2}(G,K^{\ast })$.

Assume $G$ is a finite abelian group presented as $G\cong Z_{n_{1}}\times
Z_{n_{2}}\times \cdots \times Z_{n_{k}}$. First, let us consider the case
where $K=\mathbb{C}$.

As we showed before, 
\begin{equation*}
H^{2}(\mathbb{Z}_{n_{1}}\times \cdots \times \mathbb{Z}_{n_{k}},\mathbb{C}%
^{\ast })\cong \left( \bigoplus_{i=1}^{k}\frac{\mathbb{C}^{\ast }}{n_{i}%
\mathbb{C}^{\ast }}\right) \bigoplus \left( \bigoplus_{1\leq i<j\leq k}\text{%
Ann}_{\mathbb{C}^{\ast }}(n_{i})\cap \text{Ann}_{\mathbb{C}^{\ast
}}(n_{j})\right) .
\end{equation*}%
Note that $\mathbb{C}^{\ast }=n\mathbb{C}^{\ast },$ for all $n\in \mathbb{N}$%
. If $d_{i,j}=\text{gcd}(n_{i},n_{j})$ denotes the greatest common divisor
for $i,j=1,2,\dots ,k$, then $\text{Ann}_{\mathbb{C}^{\ast }}(n_{i})\cap 
\text{Ann}_{\mathbb{C}^{\ast }}(n_{j})=\text{Ann}_{\mathbb{C}^{\ast
}}(d_{i,j})$. Thus, we have the following isomorphism: 
\begin{equation*}
H^{2}(\mathbb{Z}_{n_{1}}\times \cdots \times \mathbb{Z}_{n_{k}},\mathbb{C}%
^{\ast })\cong \bigoplus_{1\leq i<j\leq k}\text{Ann}_{\mathbb{C}^{\ast
}}(d_{i,j})\cong \bigoplus_{1\leq i<j\leq k}\mathbb{Z}_{d_{i,j}}.
\end{equation*}%
Therefore, there are $d$ non-isomorphic associative $G$-graded twisted $%
\mathbb{C}$- algebras, where $d=\prod_{1\leq i<j\leq k}d_{i,j}$.

Now we deal with the case $K=\mathbb{R}$. We know that 
\begin{equation*}
H^{2}(\mathbb{Z}_{n_{1}}\times \cdots \times \mathbb{Z}_{n_{k}},\mathbb{R}%
^{\ast })\cong \left( \bigoplus_{i=1}^{k}\frac{\mathbb{R}^{\ast }}{n_{i}%
\mathbb{R}^{\ast }}\right) \bigoplus \left( \bigoplus_{1\leq i<j\leq k}\text{%
Ann}_{\mathbb{R}^{\ast }}(n_{i})\cap \text{Ann}_{\mathbb{R}^{\ast
}}(n_{j})\right) .
\end{equation*}%
By reorganizing the $n_{j}$'s we may assume, without loss of generality,
that $n_{1},\dots ,n_{s}$ are even and that $n_{s+1},\dots ,n_{k}$ are odd.
We know that $\mathbb{R}^{\ast }/n\mathbb{R}^{\ast }=\{1\},$ if $n$ is odd,
and $\mathbb{R}^{\ast }/n\mathbb{R}^{\ast }=\{1,-1\},$ if $n$ is even, and
that Ann$_{\mathbb{R}^{\ast }}(n)=\{1\}$, if $n$ is odd, and Ann$_{\mathbb{R}%
^{\ast }}(n)=\{1,-1\}$, if $n$ is even. Therefore, 
\begin{equation*}
H^{2}(\mathbb{Z}_{n_{1}}\times \cdots \times \mathbb{Z}_{n_{s}}\times 
\mathbb{Z}_{n_{s+1}}\times \cdots \times \mathbb{Z}_{n_{k}},\mathbb{R}^{\ast
})\cong \left( \bigoplus_{i=1}^{s}\mathbb{Z}_{2}\right) \bigoplus \left(
\bigoplus_{i=1}^{\frac{(s-1)s}{2}}\mathbb{Z}_{2}\right) \cong
\bigoplus_{i=1}^{\frac{s(s+1)}{2}}\mathbb{Z}_{2}.
\end{equation*}%
This formula readily implies the following theorem.

\begin{theorem}
Let $G$ be written as a product $G=\mathbb{Z}_{n_{1}}\times \cdots \times 
\mathbb{Z}_{n_{s}}\times \mathbb{Z}_{n_{s+1}}\times \cdots \times \mathbb{Z}%
_{n_{k}}$ where $n_{1},\dots ,n_{s}$ are even and $n_{s+1},\dots ,n_{k}$ are
odd. Then there are $2^{s(s+1)/2}$ non-isomorphic associative $G$-graded
twisted $\mathbb{R}$-algebras.
\end{theorem}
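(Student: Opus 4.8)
The plan is to derive the stated count directly from the cohomology formula established in Theorem~\ref{tm} together with Theorem~\ref{ti}, following exactly the pattern already used for the complex case. Since $K=\mathbb{R}$ and the number of non-isomorphic associative $G$-graded twisted $K$-algebras equals $|H^{2}(G,K^{\ast})|$, it suffices to compute the cardinality of $H^{2}(G,\mathbb{R}^{\ast})$ when $G=\mathbb{Z}_{n_{1}}\times\cdots\times\mathbb{Z}_{n_{k}}$. First I would record the two elementary facts about the group $\mathbb{R}^{\ast}$ under multiplication: for $n$ odd the $n$-th power map on $\mathbb{R}^{\ast}$ is a bijection (every real has a unique real $n$-th root), so $\mathbb{R}^{\ast}/n\mathbb{R}^{\ast}=\{1\}$ and $\mathrm{Ann}_{\mathbb{R}^{\ast}}(n)=\{1\}$; for $n$ even the image $n\mathbb{R}^{\ast}$ is exactly the positive reals, so $\mathbb{R}^{\ast}/n\mathbb{R}^{\ast}\cong\mathbb{Z}_{2}$, and the only real roots of unity being $\pm1$ with $(-1)^{n}=1$ exactly when $n$ is even gives $\mathrm{Ann}_{\mathbb{R}^{\ast}}(n)\cong\mathbb{Z}_{2}$.

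Next I would substitute these into the formula from Theorem~\ref{tm}. Writing $G=\mathbb{Z}_{n_{1}}\times\cdots\times\mathbb{Z}_{n_{s}}\times\mathbb{Z}_{n_{s+1}}\times\cdots\times\mathbb{Z}_{n_{k}}$ with $n_{1},\dots,n_{s}$ even and $n_{s+1},\dots,n_{k}$ odd, the first summand $\bigoplus_{i=1}^{k}\mathbb{R}^{\ast}/n_{i}\mathbb{R}^{\ast}$ contributes a copy of $\mathbb{Z}_{2}$ for each of the $s$ even indices and nothing for the odd ones, giving $(\mathbb{Z}_{2})^{s}$. For the second summand, $\mathrm{Ann}_{\mathbb{R}^{\ast}}(n_{i})\cap\mathrm{Ann}_{\mathbb{R}^{\ast}}(n_{j})$ is $\mathbb{Z}_{2}$ precisely when both $n_{i}$ and $n_{j}$ are even, i.e. when $1\le i<j\le s$, and is trivial otherwise; there are $\binom{s}{2}=s(s-1)/2$ such pairs. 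Hence $H^{2}(G,\mathbb{R}^{\ast})\cong(\mathbb{Z}_{2})^{s}\oplus(\mathbb{Z}_{2})^{s(s-1)/2}\cong(\mathbb{Z}_{2})^{s+s(s-1)/2}=(\mathbb{Z}_{2})^{s(s+1)/2}$, which has cardinality $2^{s(s+1)/2}$.

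Finally, I would invoke the consequence of Theorem~\ref{ti} recorded in the text—that two associative $G$-graded twisted algebras are graded-isomorphic iff their structure-constant classes agree in $H^{2}(G,K^{\ast})$, so the count of isomorphism classes is $|H^{2}(G,\mathbb{R}^{\ast})|$—to conclude that there are exactly $2^{s(s+1)/2}$ non-isomorphic associative $G$-graded twisted $\mathbb{R}$-algebras. One small point worth making explicit is well-definedness: since the decomposition of $G$ into cyclic factors is not unique, one should note that $s$, interpreted as the number of even invariant factors (equivalently, whether $2$ divides a given elementary-divisor block), is determined by the $2$-primary part of $G$, so the answer does not depend on the chosen presentation; alternatively, invoke that $H^{2}(G,\mathbb{R}^{\ast})$ is itself an invariant of $G$. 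I do not expect any real obstacle here—the argument is a direct specialization of Theorem~\ref{tm}—the only thing to be careful about is the bookkeeping in the second step and stating the two facts about $\mathbb{R}^{\ast}$ cleanly.
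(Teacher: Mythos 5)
Your proposal is correct and follows essentially the same route as the paper: it substitutes the facts $\mathbb{R}^{\ast}/n\mathbb{R}^{\ast}\cong\mathrm{Ann}_{\mathbb{R}^{\ast}}(n)\cong\mathbb{Z}_{2}$ for $n$ even (trivial for $n$ odd) into the formula of Theorem \ref{tm} and then counts via Theorem \ref{ti}, exactly as the text does. The added remark on independence of the chosen cyclic decomposition is a small refinement not present in the paper but does not change the argument.
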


In the next section we deal with the classification problem in the
non-associative case. We study those algebras satisfying a type of symmetry
that will be called $(1,2)$-symmetry.

\section{Classification of $(1,2)$-symmetric $G$-graded twisted $\mathbb{C}$%
-algebras.}

We recall that given $W$ a $G$-graded twisted $K$-algebra with a fixed basis 
$\mathcal{B}$ and structure constant $C:G\times G\rightarrow K^{\ast }$, the
associative function $r:G\times G\times G\rightarrow K^{\ast }$ was defined
as $r(a,b,c)=C(b,c)C(ab,c)^{-1}C(a,bc)C(a,b)^{-1}$.

\begin{remark}
The function $r:G\times G\times G\rightarrow K^{\ast }$ does not depend on
the choice basis of $W$ (see \cite{jwn}).
\end{remark}

\begin{definition}
Let $W$ be a $G$-graded twisted $K$-algebra. We say that $W$ is $(1,2)$%
-symmetric if $r(a,b,c)=r(b,a,c)$ for every $a,b,c\in G$.
\end{definition}

In \cite{jwn}, it was proved that for $G\cong \mathbb{Z}_{n}$, a cyclic
group, the number of non-(graded) isomorphic $(1,2)$-symmetric $G$-graded
twisted $\mathbb{C}$-algebras with structure constants taking values in a
finite subgroup $A\subset \mathbb{C}^{\ast }$ is given by $%
|R_{n}|^{|G|-2}=|R_{n}|^{n-2}$, where $R_{n}$ denotes the set of $n$-th
roots of unity in $A$. In this section, we provide a generalization of the
arguments used in \cite{jwn} that will allow us to state an equivalent
result for the case of any finite abelian group. For the sake of clarity we
will mainly focus on groups that are the product of only two cyclic groups.
At the end of this section we deal with finite abelian groups in general.
Since the arguments are almost identical to the case of groups that are the
product of two factors, we will limit ourselves to sketch the main arguments.

Suppose $G$ is presented as a product $\mathbb{Z}_{m}\times \mathbb{Z}_{n}$,
and consider $W=\bigoplus_{a,b\in G}W_{a,b}$ a $(1,2)$-symmetric $G$-graded
twisted $\mathbb{C}$-algebra, with a fixed basis $\{x_{a,b}~:~x_{a,b}\in
W_{a,b}\}$ and structure constant $\widetilde{C}:G\times G\rightarrow
A\subset \mathbb{C}^{\ast }$, $A$ a finite subgroup of $\mathbb{C}^{\ast }$.
When $G=\mathbb{Z}_{n}$ is a cyclic group with generator $g\in G$, a basis
for $W$ was called \textit{standard} if it had the form $%
\{1,w_{g}^{(1)},w_{g}^{(2)},\dots ,w_{g}^{(n-1)}\}$ where $%
w_{g}^{(i)}=w_{g}\cdot w_{g}^{(i-1)}$ and $w_{g}\cdot w_{g}^{(n-1)}=1$ \cite%
{jwn}. We generalize this construction as follows. We may think of the
graduation of $W$ as an array of the following form: 
\begin{equation*}
\xymatrix{W_{0,0} & W_{0,1} & W_{0,2} & \cdots & W_{0,n-1} \\ W_{1,0} &
W_{1,1} & W_{1,2} & \cdots & W_{1,n-1} \\ \vdots & \vdots & \vdots & \cdots
& \vdots \\ W_{m-1,0} & W_{m-1,1} & W_{m-1,2} & \cdots & W_{m-1,n-1}}.
\end{equation*}%
As in the cyclic case, we choose standard bases for the first row and first
column. These two bases will be denoted by $\{1,w_{0,1},w_{0,2},\dots
,w_{0,n-1}\}$ and $\{1,w_{1,0},w_{2,0},\dots ,w_{m-1,0}\}$, respectively.
Now for the i-th row define $w_{i,j}=w_{0,1}\cdot w_{i,j-1}$ for $j=1,\dots
,n-1$. We notice that $w_{0,1}\cdot w_{i,n-1}\in W_{i,0}$. Hence, $%
w_{0,1}\cdot w_{i,n-1}=\alpha _{i}\cdot w_{i,0},$ for some $\alpha _{i}\in A$%
. We call ~$\mathcal{B}=\{w_{i,j}~:~i=0,\dots ,m-1,~j=0,\dots ,n-1\}$~a 
\textit{standard basis} for~$W$.

Define $T_{i,j}:W\rightarrow W$ to be the linear transformation given by $%
T_{i,j}(x)=w_{i,j}\cdot x$. For $T_{0,1}$, its rational form consists of $m$%
-blocks where each one looks like 
\begin{equation*}
\left[ 
\begin{array}{cccccc}
0 & 0 & 0 & \cdots & 0 & \alpha _{i} \\ 
1 & 0 & 0 & \cdots & 0 & 0 \\ 
0 & 1 & 0 & \cdots & 0 & 0 \\ 
\vdots & \vdots & \vdots &  & \vdots & \vdots \\ 
0 & 0 & 0 & \cdots & 1 & 0%
\end{array}%
\right] ,
\end{equation*}%
where $\alpha _{0}=1,\alpha _{1},\dots ,\alpha _{m-1}$ are elements in~$A$%
~such that~$T_{0,1}(w_{i,n-1})=\alpha _{i}\cdot w_{i,0}$. Let us denote by $%
\{e_{i,j}\}_{j=0,\dots ,n-1}$ the $n$-th roots of $\alpha _{i}$,~for $%
i=0,\dots ,m-1$. A straightforward computation shows that $e_{i,j}$~is an
eigenvalue of $T_{0,1}$ with eigenvector 
\begin{equation}
z_{i,j}=\sum_{k=0}^{n-1}e_{i,j}^{-k}\cdot w_{i,k}.  \label{e29}
\end{equation}%
From now on, the elements of the group $G$ will be denoted by products of
the form $a^{r}b^{s}$, with $0\leq r\leq m-1$, $0\leq s\leq n-1$. Also we
will write $w_{r,s}\cdot w_{i,j}=C(a^{r}b^{s},a^{i}b^{j})\cdot w_{r+i,s+j}$.
Notice that $T_{r,s}\circ T_{i,j}(x)=q(a^{r}b^{s},a^{i}b^{j}))\cdot
T_{i,j}\circ T_{r,s}(x)~$for every$~x\in W.$ Here, $%
q(a^{r}b^{s},a^{i}b^{j})=C(a^{r}b^{s},a^{i}b^{j})C(a^{i}b^{j},a^{r}b^{s})^{-1} 
$ (see \cite{jwn}). Therefore, 
\begin{eqnarray*}
T_{0,1}(T_{r,s}(z_{i,j}))
&=&q(b,a^{r}b^{s})T_{r,s}(T_{0,1}(z_{i,j}))~=~q(b,a^{r}b^{s})T_{r,s}(e_{i,j}z_{i,j})
\\
&=&q(b,a^{r}b^{s})e_{i,j}T_{r,s}(z_{i,j}).
\end{eqnarray*}%
Hence, $T_{r,s}(z_{i,j})$ is an eigenvector of $T_{0,1}$ associated to the
eigenvalue $q(b,a^{r}b^{s})e_{i,j}$. Since 
\begin{equation}
T_{r,s}(z_{i,j})=w_{r,s}\cdot
\sum_{k=0}^{n-1}e_{i,j}^{-k}w_{i,k}=%
\sum_{k=0}^{n-1}e_{i,j}^{-k}C(a^{r}b^{s},a^{i}b^{k}))w_{[r+i],[s+k]},
\label{e30}
\end{equation}%
where $[~]$ denotes the equivalence class in $\mathbb{Z}_{n}$ or $\mathbb{Z}%
_{m}$, we deduce that 
\begin{equation}
T_{r,s}(z_{i,j})=\eta _{r,s}^{i,j}\cdot z_{[r+i],l},  \label{eii}
\end{equation}%
for some $l \in \{0,1,\dots ,n-1\}$ and some $\eta _{r,s}^{i,j}\in K^{\ast }$%
. Also, since $q(b,a^{r}b^{s})e_{i,j}$ is an eigenvalue associated to $%
T_{r,s}(z_{i,j})$ we see that 
\begin{equation}
q(b,a^{r}b^{s})e_{i,j}=e_{[r+i],l}.  \label{e31}
\end{equation}%
By definition $e_{i,j}^{n}=\alpha _{i}$. Therefore, 
\begin{equation}
q(b,a^{r}b^{s})^{n}\alpha _{i}=\alpha _{\lbrack r+i]}.  \label{ei}
\end{equation}%
It follows from the definition of a standard basis that $C(b,a^{r}b^{s})=1,~%
\text{if}~s\neq n-1$, and that $C(b,a^{r}b^{n-1})=\alpha _{r}$. Therefore,
from equation (\ref{ei}) we obtain $\alpha
_{r}=C(a^{r}b^{s},b)^{-n},~r=1,2,\dots ,m$, when $s\neq n-1$, and $\alpha
_{r}^{n-1}=C(a^{r}b^{n-1},b)^{n},~r=1,2,\dots ,m$. Hence, as $%
q(b,a^{r}b^{s})=C(a^{r}b^{s},b)^{-1},$ when $s\neq n-1$, by replacing $%
q(b,a^{r}b^{s})$ in equation (\ref{ei}), we get $\alpha _{r}\alpha
_{i}=\alpha _{\lbrack r+i]}$. From this we see that $\alpha _{i}=\alpha
_{1}^{i}$. Finally, notice that $\alpha _{i}=C(a,b)^{-in}$. Since $\alpha
_{1}^{m}=1$, then $C(a,b)^{mn}=1$.

We summarize below what we have obtained so far: 
\begin{equation}
\begin{aligned} \alpha_{i} & =\alpha_{1}^{i},\\ \alpha_{i} & =
C(a,b)^{-in},\\ C(a^{r}b^{s},b)^{-n}&
=\alpha_{r}=(C(a,b)^{-r})^{n},~~\textrm{for}~ s \neq n-1,\\
C(a^{r}b^{n-1},b)^{n}& = \alpha_{r}^{n-1}=(C(a,b)^{-r(n-1)})^{n},\\
C(b^{s},b)^{n}& =1,~~~ \textrm{(cyclic case)}\\ C(a^{r},a)^{m}& =1,~~~
\textrm{(cyclic case)}\\ C(a,b)^{mn} & =1. \end{aligned}
\end{equation}%
On the other hand, by equation (\ref{e31}) it follows that 
\begin{equation*}
\eta _{r,s}^{i,j}\cdot Z_{[r+i],l}=\sum_{k=0}^{n-1}\eta _{r,s}^{i,j}\cdot
e_{[r+i],l}^{-k}\cdot w_{[r+i],k}=\sum_{k=0}^{n-1}\eta _{r,s}^{i,j}\cdot
q(b,a^{r}b^{s})^{-k}\cdot e_{i,j}^{-k}\cdot w_{[r+i],k}.
\end{equation*}%
Equations (\ref{e30}) and (\ref{eii}) imply the following identity: 
\begin{equation*}
\sum_{k=0}^{n-1}e_{i,j}^{-k}C(a^{r}b^{s},a^{i}b^{k})w_{[r+i],[s+k]}=%
\sum_{k=0}^{n-1}\eta _{r,s}^{i,j}\cdot q(b,a^{r}b^{s})^{-k}\cdot
e_{i,j}^{-k}\cdot w_{[r+i],k}.
\end{equation*}%
But the last equation is equivalent to 
\begin{equation*}
\sum_{k=s}^{n-1}e_{i,j}^{-(k-s)}C(a^{r}b^{s},a^{i}b^{k-s})w_{[r+i],k}=%
\sum_{k=s}^{n-1}\eta _{r,s}^{i,j}\cdot q(b,a^{r}b^{s})^{-k}\cdot
e_{i,j}^{-k}\cdot w_{[r+i],k},
\end{equation*}%
and also equivalent to 
\begin{equation*}
\sum_{k=0}^{s-1}e_{i,j}^{-(n+k-s)}C(a^{r}b^{s},a^{i}b^{n+k-s})w_{[r+i],k}=%
\sum_{k=0}^{s-1}\eta _{r,s}^{i,j}\cdot q(b,a^{r}b^{s})^{-k}\cdot
e_{i,j}^{-k}\cdot w_{[r+i],k}~~\text{for}~~s\neq 0.
\end{equation*}%
Therefore, the equations 
\begin{equation}
e_{i,j}^{-(k-s)}\cdot C(a^{r}b^{s},a^{i}b^{k-s})=\eta _{r,s}^{i,j}\cdot
q(b,a^{r}b^{s})^{-k}\cdot e_{i,j}^{-k}~\text{for}~k=s,s+1,\dots ,n-1,
\label{e32}
\end{equation}%
and 
\begin{equation}
e_{i,j}^{-(n+k-s)}\cdot C(a^{r}b^{s},a^{i}b^{n+k-s})=\eta _{r,s}^{i,j}\cdot
q(b,a^{r}b^{s})^{-k}\cdot e_{i,j}^{-k}~\text{for}~k=0,1,\dots ,s-1,~~\text{%
for}~s\neq 0  \label{e33}
\end{equation}%
hold.

From (\ref{e32}) if we let $k=s$, we deduce that $\eta
_{r,s}^{i,j}=C(a^{r}b^{s},a^{i})\cdot q(b,a^{r}b^{s})^{s}\cdot e_{i,j}^{s}.$
Replacing the last equation in (\ref{e32}) and in (\ref{e33}) we get that: 
\begin{equation}
C(a^{r}b^{s},a^{i}b^{l})=C(a^{r}b^{s},a^{i})\cdot C(b,a^{r}b^{s})^{-l}\cdot
C(a^{r}b^{s},b)^{l},~~\text{if}~~0\leq l<n-s,  \label{e34}
\end{equation}%
and 
\begin{equation}
C(a^{r}b^{s},a^{i}b^{l})=C(a^{r}b^{s},a^{i})\cdot C(b,a^{r}b^{s})^{n-l}\cdot
C(a^{r}b^{s},b)^{l-n}\cdot \alpha _{i},~~\text{if}~~n-s\leq l\leq
n-1;~~s\neq 0.  \label{e35}
\end{equation}%
Now we take in account the symmetry condition of $r$: $r(a,b,c)=r(b,a,c)$
for every $a,b,c\in G$. For any three general elements $%
a^{r}b^{s},a^{i}b^{k},a^{j}b^{l}\in G$ the symmetry condition looks like: 
\begin{equation}
C(a^{i}b^{k},a^{j}b^{l})C(a^{r}b^{s},a^{i+j}b^{k+l})C(a^{r}b^{s},a^{i}b^{k})^{-1}=C(a^{r}b^{s},a^{j}b^{l})C(a^{i}b^{k},a^{r+j}b^{s+l})C(a^{i}b^{k},a^{r}b^{s})^{-1}.
\label{e36}
\end{equation}%
Taking $k=0$ and $l=0$ in the above equation, and using the equation (\ref%
{e34}) and the fact that $C(a^{i},a^{j})=C(a^{i},a)^{j}$ (cyclic case, see 
\cite{jwn}) we obtain: $%
C(a^{r}b^{s},a^{i+j})=C(a^{r}b^{s},a^{i})C(a^{r}b^{s},a^{j})$. Then,
recursively, we get $C(a^{r}b^{s},a^{j})=C(a^{r}b^{s},a)^{j}$, and therefore 
$C(a^{r}b^{s},a)^{m}=1$. Also, notice that since $C(a^{r}b^{s},b)^{-n}=%
\alpha _{r}$ when $s\neq n-1$ and $C(a^{r}b^{n-1},b)^{-n}=\alpha _{r}^{1-n}$%
, we can rewrite equations (\ref{e34}) and (\ref{e35}) in the following
manner: 
\begin{equation*}
\begin{aligned}C(a^{r}b^{s},a^{i}b^{l})& =
C(a^{r}b^{s},a)^{i}C(a^{r}b^{s},b)^{l},~~ \textrm{for}~~ 0 \leq l \leq
n-s-1,\\ C(a^{r}b^{s},a^{i}b^{l})&
=C(a^{r}b^{s},a)^{i}C(a^{r}b^{s},b)^{l}\alpha_{r}\alpha_{i},~~
\textrm{for}~~ n-s \leq l \leq n-1~~ \textrm{and}~~ s \neq n-1,\\
C(a^{r}b^{n-1},a^{i}b^{l})&
=C(a^{r}b^{s},a)^{i}C(a^{r}b^{s},b)^{l}\alpha_{r}^{1-l}\alpha_{i}.
\end{aligned}
\end{equation*}%
We conclude that for a $(1,2)$-symmetric $\mathbb{Z}_{m}\times \mathbb{Z}%
_{n} $-graded twisted $\mathbb{C}$-algebra the structure constant $C:G\times
G\rightarrow A$ referred to a standard basis $\mathcal{B}$ must satisfy the
following equations: 
\begin{equation}
\begin{aligned} C(a^{r}b^{s},a^{i}b^{l})& =
C(a^{r}b^{s},a)^{i}C(a^{r}b^{s},b)^{l},~~ \textrm{for}~~ 0 \leq l \leq
n-s-1,\\ C(a^{r}b^{s},a^{i}b^{l})&
=C(a^{r}b^{s},a)^{i}C(a^{r}b^{s},b)^{l}\alpha_{r}\alpha_{i},~~
\textrm{for}~~ n-s \leq l \leq n-1~~ \textrm{and}~~ s \neq n-1,~ s \neq 0,\\
C(a^{r}b^{n-1},a^{i}b^{l})&
=C(a^{r}b^{n-1},a)^{i}C(a^{r}b^{n-1},b)^{l}\alpha_{r}^{1-l}\alpha_{i}~~
\textrm{if}~~ 1 \leq l \leq n-1,\\ C(a^{r}b^{s},a)^{m} & =1,\\ \alpha_{i} &
=\alpha_{1}^{i},\\ \alpha_{i} & = C(a,b)^{-in},\\ C(a^{r}b^{s},b)^{-n}&
=\alpha_{r}=(C(a,b)^{-r})^{n},~~\textrm{for}~ s \neq n-1,\\
C(a^{r}b^{n-1},b)^{n}& = \alpha_{r}^{n-1}=(C(a,b)^{-r(n-1)})^{n},\\
C(b^{s},b)^{n}& =1,~~~ \textrm{(cyclic case)}\\ C(a^{r},a)^{m}& =1,~~~
\textrm{(cyclic case)}\\ C(a,b)^{mn} & =1. \end{aligned}  \label{e50}
\end{equation}

Now we prove that two $(1,2)$-symmetric $G$-graded twisted algebras $W_{1}$
and $W_{2}$ are graded-isomorphic if and only if their structure constants
referred to standard bases are the same.

\begin{theorem}
\label{tii} Let $W_{1}$ and $W_{2}$ be $(1,2)$-symmetric $G$-graded twisted $%
\mathbb{C}$-algebras with standard bases $\mathcal{B}_{1}$ and $\mathcal{B}%
_{2}$, respectively, and associated structure constants $C_{1},C_{2}:G\times
G\rightarrow A$. Then, $W_{1}$ is graded-isomorphic to $W_{2}$ if and only
if $C_{1}=C_{2}$.
\end{theorem}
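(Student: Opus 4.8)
The plan is to prove the two implications separately: the forward implication is purely formal, while the reverse one reduces to a single technical fact about standard bases. For the ``if'' direction, if $C_1=C_2$ then the $\mathbb C$-linear map $\varphi\colon W_1\to W_2$ carrying the degree-$g$ element of $\mathcal B_1$ to the degree-$g$ element of $\mathcal B_2$ is grading-preserving, sends $1$ to $1$, and satisfies $\varphi(w_aw_b)=\varphi(C_1(a,b)w_{ab})=C_1(a,b)w'_{ab}=C_2(a,b)w'_{ab}=w'_aw'_b=\varphi(w_a)\varphi(w_b)$; hence it is a graded isomorphism.

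For the ``only if'' direction, write $G=\mathbb Z_m\times\mathbb Z_n$ and let $\varphi\colon W_1\to W_2$ be a graded isomorphism. The first step is the observation that ``being a standard basis'' is expressed entirely by relations among products of basis elements: the recursions $w_{0,j}=w_{0,1}\cdot w_{0,j-1}$, $w_{i,0}=w_{1,0}\cdot w_{i-1,0}$, $w_{i,j}=w_{0,1}\cdot w_{i,j-1}$, the normalizations $w_{0,0}=1$, $w_{0,1}\cdot w_{0,n-1}=1$, $w_{1,0}\cdot w_{m-1,0}=1$, together with the requirement that the degree-$(i,j)$ member lie in $W_{i,j}$ and that $\alpha_i\in A$ --- all preserved by the unital graded algebra isomorphism $\varphi$. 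Hence $\varphi(\mathcal B_1)$ is again a standard basis of $W_2$, and because $\varphi$ is multiplicative the structure constant of $W_2$ relative to $\varphi(\mathcal B_1)$ is exactly $C_1$. It therefore suffices to prove that the structure constant of a $(1,2)$-symmetric algebra relative to a standard basis does not depend on which standard basis is chosen: then the structure constant of $W_2$ relative to $\varphi(\mathcal B_1)$ also equals $C_2$ and we conclude $C_1=C_2$.

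The heart of the proof is thus this invariance, and I expect it to be the only step requiring care. I would first describe all standard bases of a fixed $(1,2)$-symmetric algebra $W$: the degree-$(0,1)$ member $w_{0,1}$ and the degree-$(1,0)$ member $w_{1,0}$ determine the whole standard basis through the recursions above, and any other standard basis has these two members of the form $\lambda w_{0,1}$, $\nu w_{1,0}$; an easy induction then shows the new basis is $\{\lambda^j\nu^i w_{i,j}\}$. The normalizations $w_{0,1}\cdot w_{0,n-1}=1$ and $w_{1,0}\cdot w_{m-1,0}=1$ force $\lambda^n=\nu^m=1$, while $w_{0,1}\cdot w_{i,n-1}=\alpha_iw_{i,0}$ imposes nothing new (it merely gives $\alpha_i'=\lambda^n\alpha_i=\alpha_i$); conversely any such $\lambda,\nu$ yields a standard basis, so the standard bases form one orbit under $R_n\times R_m$. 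Comparing
\[
(\lambda^s\nu^rw_{r,s})(\lambda^j\nu^iw_{i,j})=\lambda^{s+j}\nu^{r+i}\,C(a^rb^s,a^ib^j)\,w_{[r+i],[s+j]}
\]
with the expansion of the same product in the rescaled basis, whose degree-$([r+i],[s+j])$ member is $\lambda^{[s+j]}\nu^{[r+i]}w_{[r+i],[s+j]}$, shows the rescaled structure constant equals $\lambda^{(s+j)-[s+j]}\nu^{(r+i)-[r+i]}$ times $C(a^rb^s,a^ib^j)$; since $(s+j)-[s+j]\in\{0,n\}$ and $(r+i)-[r+i]\in\{0,m\}$ and $\lambda^n=\nu^m=1$, the two structure constants coincide. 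This is exactly the ``wrap-around'' bookkeeping from the cyclic case of \cite{jwn}, now run in each of the two directions; it is the main obstacle, everything else being formal. For a general finite abelian group $G=\mathbb Z_{n_1}\times\cdots\times\mathbb Z_{n_k}$ the argument is identical, with the rescaling group $R_{n_1}\times\cdots\times R_{n_k}$ acting by $w_{\mathbf i}\mapsto\prod_\ell\lambda_\ell^{i_\ell}w_{\mathbf i}$ with $\lambda_\ell^{n_\ell}=1$.
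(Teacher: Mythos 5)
Your proof is correct; it reorganizes, rather than replaces, the computation in the paper's own argument. The paper proceeds cohomologically: by Theorem \ref{ti} a graded isomorphism yields $C_{1}=\partial ^{1}(\rho )C_{2}$ for some $\rho :G\rightarrow A$, and the normalizations built into a standard basis ($C_{j}(a,a^{i})=1$, $C_{j}(b,a^{i}b^{k})=1$ for $k\leq n-2$, $C_{j}(b,a^{i}b^{n-1})=\alpha _{i,(j)}$) force $\rho (a^{i}b^{k})=\rho (a)^{i}\rho (b)^{k}$ with $\rho (a)^{m}=\rho (b)^{n}=1$; thus $\rho $ is a character of $G$, $\partial ^{1}(\rho )\equiv 1$, and $C_{1}=C_{2}$. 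You instead transport $\mathcal{B}_{1}$ through the isomorphism, note that its image is a standard basis of $W_{2}$ whose structure constant is $C_{1}$, and reduce everything to the invariance of the structure constant under change of standard basis; your observation that all standard bases of a fixed algebra form one orbit under $w_{i,j}\mapsto \lambda ^{j}\nu ^{i}w_{i,j}$ with $\lambda ^{n}=\nu ^{m}=1$ is precisely the statement that the paper's $\rho $ is the character $a^{i}b^{j}\mapsto \nu ^{i}\lambda ^{j}$, and your wrap-around bookkeeping is exactly the verification that the coboundary of such a character is trivial. What your route buys is self-containedness --- it needs neither the cocycle/coboundary criterion of Theorem \ref{ti} nor the fact that $r_{1}=r_{2}$ makes $C_{1}C_{2}^{-1}$ a cocycle --- together with an explicit description of the ambiguity in the choice of standard basis; what the paper's route buys is brevity, since the relation $C_{1}=\partial ^{1}(\rho )C_{2}$ is already available from the earlier machinery. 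Both versions extend to $k$ factors in the way you indicate.
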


\begin{proof}
Suppose that~ $W_{1}$~ is graded-isomorphic to~ $W_{2}$.~ By Theorem \ref{ti}%
,~ $r_{1}=r_{2}$~ and~ $[C_{1}]=[C_{2}]$~ in~ $H^{2}(G,A)$.~ If~ $%
[C_{1}]=[C_{2}]$,~ then there exists~ $\rho :G\rightarrow A$~ such that $%
C_{1}=\partial ^{1}(\rho )C_{2}.$ That is:
\begin{equation}
C_{1}(a^{r}b^{s},a^{i}b^{k})=\rho (a^{i}b^{k})\rho (a^{r+i}b^{s+k})^{-1}\rho
(a^{r}b^{s})C_{2}(a^{r}b^{s},a^{i}b^{k}).  \label{ro}
\end{equation}%
Since the structure constants~ $C_{1},C_{2}$~ are referred to standard
bases, the following equalities hold for~ $j=1,2$:
\begin{eqnarray*}
C_{j}(1,a^{i}b^{k}) &=&C_{j}(a^{i}b^{k},1)=1,\text{ for all }i,k. \\
C_{j}(a,a^{i}) &=&1,~~\text{for}~~i=0,1,\dots ,m-1. \\
C_{j}(b,a^{i}b^{k}) &=&1,~~\text{for}~~i=0,1,\dots ,m-1,~~k=0,1,\dots ,n-2.
\\
C_{j}(b,a^{i}b^{n-1}) &=&\alpha _{i,(j)}.
\end{eqnarray*}%
These identities together with equation (\ref{ro}) yield: $\rho (a^{i})=\rho
(a)^{i}~~$for all$~~i=0,1,2,\dots ,n-1.$ In particular,~ $\rho (1)=1$~ and~ $%
\rho (a)^{m}=1$,~ and
\begin{equation*}
\rho (a^{i}b^{k})=\rho (a)^{i}\rho (b)^{k}~~\text{for}~~k\neq n.
\end{equation*}%
Moreover, $\rho (b^{i})=\rho (b)^{i}~~$for$~~i=0,1,\dots ,n$, since~
\begin{eqnarray*}
1 &=&C_{1}(b,b^{i})=\rho (b^{i})\rho (b^{i+1})^{-1}\rho (b)C_{2}(b,b^{i}) \\
&=&\rho (b^{i})\rho (b^{i+1})^{-1}\rho (b).
\end{eqnarray*}%
Therefore, $\rho (b)^{n}=\rho (b^{n})=1.$ All this can be summarize by
saying that~ $\rho :G\rightarrow A$~ is a group homomorphism. It immediately
follows ~ $\partial ^{1}(\rho )\equiv 1$~ what implies that~ $C_{1}=C_{2}$.
The reciprocal is clear.
\end{proof}

Finally, we want to see that if $C:G\times G\rightarrow A$ is a function
satisfying the identities stated in (\ref{e50}) then the vector space $%
\mathbb{C}^{m}\times \mathbb{C}^{n}$, endowed with the structure of a $G$%
-graded twisted algebra defined by the functions $C$ (referred to the
canonical basis of $\mathbb{C}^{m}\times \mathbb{C}^{n}$) is a $(1,2)$%
-symmetric $G$-graded twisted $\mathbb{C}$-algebra.

\begin{theorem}
\label{tiii}Let $G=\mathbb{Z}_{m}\times \mathbb{Z}_{n}$ and let $A\subset 
\mathbb{C}^{\ast }$ be a finite subgroup. Suppose that we choose values in $%
A $ for $C(a^{r}b^{s},a)$ and $C(a^{r}b^{s},b)$ satisfying the identities in
(\ref{e50}). Then $W=\mathbb{C}^{m}\times \mathbb{C}^{n}$ with the
multiplication given by $C$ (referred to the canonical basis of $\mathbb{C}%
^{m}\times \mathbb{C}^{n}$) is a $(1,2)$-symmetric $G$-graded twisted $%
\mathbb{C}$- algebra.
\end{theorem}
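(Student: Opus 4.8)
The plan is to realize $W$ as a twisted group algebra of $G$ and to observe that, once the structure constant is consistently defined on all of $G\times G$, the axioms of a $G$-graded twisted algebra are immediate and the only substantive point is the $(1,2)$-symmetry. First I would use the three displayed formulas at the head of (\ref{e50}) to extend the given data $C(a^{r}b^{s},a)$, $C(a^{r}b^{s},b)$ (together with $\alpha_{i}=\alpha_{1}^{i}=C(a,b)^{-in}$) to a function $C:G\times G\to A$: for a pair $(a^{r}b^{s},a^{i}b^{l})$ with $0\le l\le n-1$, exactly one of the three formulas applies according to whether $s+l<n$, or $s+l\ge n$ with $s\notin\{0,n-1\}$, or $s=n-1$ (the case $s=0$ being subsumed in the first). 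I would check that these ranges are disjoint and exhaustive, that the formulas agree on the boundary values where they meet, and that $C(1,g)=C(g,1)=1$ for all $g$ — the latter following from the formulas and the standard-basis normalization $C(1,a)=C(1,b)=1$.

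With $C$ in hand, put $W=\bigoplus_{g\in G}\mathbb{C}\,x_{g}$ with multiplication $x_{g}\cdot x_{h}=C(g,h)\,x_{gh}$ extended bilinearly, so that $W_{g}:=\mathbb{C}\,x_{g}$ realizes the grading and $W_{g}W_{h}\subseteq W_{gh}$ by construction. Then $x_{e}$ is a two-sided identity by the previous paragraph, and since $C$ takes values in $A\subset\mathbb{C}^{\ast}$ we have $x_{g}\cdot x_{h}=C(g,h)x_{gh}\ne 0$, so $W$ has no zero monomial divisors. Hence $W$ is a $G$-graded twisted $\mathbb{C}$-algebra, and it remains only to verify it is $(1,2)$-symmetric.

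For the symmetry, write $x=a^{r}b^{s}$, $y=a^{i}b^{k}$, $z=a^{j}b^{l}$. Since $G$ is abelian the factors $C(xy,z)^{-1}=C(yx,z)^{-1}$ cancel in $r(x,y,z)$ versus $r(y,x,z)$, so $r(x,y,z)=r(y,x,z)$ is equivalent to
\begin{equation*}
\frac{C(x,yz)}{C(x,z)}\cdot\frac{C(y,z)}{C(y,xz)}=\frac{C(x,y)}{C(y,x)}=q(x,y).
\end{equation*}
If the maps $C(x,-)$ and $C(y,-)$ were genuine homomorphisms $G\to A$ the left-hand side would collapse to $C(x,y)/C(y,x)$ at once; thus the entire content of the theorem is the bookkeeping of the correction factors $\alpha_{r}\alpha_{i}$ and $\alpha_{r}^{1-l}\alpha_{i}$ in (\ref{e50}), which occur exactly when two $b$-exponents add to at least $n$. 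Concretely I would substitute the three formulas of (\ref{e50}) into each of the six structure constants above, separate cases according to the wraparound patterns of the pairs $(x,z)$, $(y,z)$, $(x,yz)$, $(y,xz)$ — whether the relevant $b$-exponent sums reach $n$, and whether any of $s,k$ equals $n-1$ — and in each case cancel to reduce to an identity among the chosen generator values. Those residual identities are then closed using only the constraints already listed in (\ref{e50}): $C(a^{r}b^{s},a)^{m}=1$, $C(a^{r}b^{s},b)^{-n}=(C(a,b)^{-r})^{n}$ for $s\ne n-1$, $C(a^{r}b^{n-1},b)^{n}=(C(a,b)^{-r(n-1)})^{n}$, $\alpha_{i}=\alpha_{1}^{i}=C(a,b)^{-in}$, and $C(a,b)^{mn}=1$.

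Once every case closes, $r$ is symmetric in its first two arguments, so $W$ is $(1,2)$-symmetric, which together with the preceding two paragraphs proves the theorem. I expect the main obstacle to be precisely this case analysis: it is entirely elementary, but tracking which ``regime'' each of the four relevant products falls into while keeping the powers of $C(a,b)$ inside the $\alpha$-factors straight is where the argument is delicate. The extension to a general finite abelian group $G_{1}\times\cdots\times G_{k}$ runs along the same lines with the obvious multi-index notation, as the paper indicates at the end of the section.
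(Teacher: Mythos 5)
Your proposal follows essentially the same route as the paper: both reduce the $(1,2)$-symmetry to the identity $C(y,z)C(x,yz)C(x,y)^{-1}=C(x,z)C(y,xz)C(y,x)^{-1}$ and verify it by substituting the formulas of (\ref{e50}) and splitting into cases according to which $b$-exponent sums wrap past $n$ (the paper packages the correction factors into a function $f(r,s,i,l)$, which is exactly your ``deviation from a homomorphism''). Your additional verification of the algebra axioms (well-definedness, identity, absence of zero monomial divisors) is a point the paper leaves implicit, and like the paper you leave the final case-by-case cancellation as a routine computation.
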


\begin{proof}
For~ $0\leq r,i\leq m-1,$~ and~ $0\leq s,l\leq n-1,$~ we define
\begin{equation*}
f(r,s,i,l)=\left\{
\begin{array}{cc}
1 & \text{if}~~0\leq l\leq n-s-1 \\
\alpha _{r}\alpha _{i} & ~~~~~\text{if}~~n-s\leq l\leq n-1,~~\text{and}%
~s\neq n-1 \\
\alpha _{r}^{1-l}\alpha _{i} & \text{if}~~s=n-1,~1\leq l\leq n-1%
\end{array}%
\right.
\end{equation*}%
From equation (\ref{e50})~ we deduce the identity
\begin{equation*}
C(a^{r}b^{s},a^{i}b^{l})=C(a^{r}b^{s},a)^{i}C(a^{r}b^{s},b)^{l}f(r,s,i,l).
\end{equation*}%
But we know that~ $%
r(a^{r}b^{s},a^{i}b^{k},a^{j}b^{l})=r(a^{i}b^{k},a^{r}b^{s},a^{j}b^{l})$~ if
and only if
\begin{equation*}
C(a^{i}b^{k},a^{j}b^{l})C(a^{r}b^{s},a^{i+j}b^{k+l})C(a^{r}b^{s},a^{i}b^{k})^{-1}=C(a^{r}b^{s},a^{j}b^{l})C(a^{i}b^{k},a^{r+j}b^{s+l})C(a^{i}b^{k},a^{r}b^{s})^{-1}.
\end{equation*}%
Therefore, this equation holds if and only if
\begin{equation}
\begin{aligned}&
f(i,k,j,l)f(r,s,i+j,[k+l])f(r,s,i,k)^{-1}C(a^{r}b^{s},b)^{[k+l]-(k+l)}=\\
&=f(r,s,j,l)f(i,k,r+j,[s+l])f(i,k,r,s)^{-1}C(a^{i}b^{k},b)^{[s+l]-(s+l)}%
\end{aligned},  \label{e52}
\end{equation}%
where $[~]$~ denotes residue classes in~ $\mathbb{Z}_{n}$. It is not
difficult to see that equation (\ref{e52}) holds by directly computing from
the identities in (\ref{e50}). For this, each one of the following cases
should be considered separately:
\begin{equation*}
\lbrack k+l]+s<n~~~\text{and}~~~[s+l]+k<n:~~\left\{
\begin{array}{c}
k+l\geq n,~s+l\geq n \\
k+l<n,~s+l<n \\
\end{array}%
\right.
\end{equation*}%
\begin{equation*}
\lbrack k+l]+s<n~~~\text{and}~~~[s+l]+k\geq n:~~\left\{
\begin{array}{c}
k+l\geq n,~s+l<n%
\end{array}%
\right.
\end{equation*}%
\begin{equation*}
\lbrack k+l]+s\geq n~~~\text{and}~~~[s+l]+k<n:~~\left\{
\begin{array}{c}
s+l\geq n,~k+l<n%
\end{array}%
\right.
\end{equation*}%
\begin{equation*}
\lbrack k+l]+s\geq n~~~\text{and}~~~[s+l]+k\geq n:~~\left\{
\begin{array}{c}
k+l\geq n,~s+l\geq n \\
k+l<n,~s+l<n%
\end{array}%
\right.
\end{equation*}
\end{proof}

We are ready to state the main theorem of this section:

\begin{theorem}
The number of (graded) isomorphism classes of $(1,2)$-symmetric $\mathbb{Z}%
_{m}\times \mathbb{Z}_{n}$-graded twisted $\mathbb{C}$-algebras with
structure constants taking values in a finite subgroup $A\subset \mathbb{C}%
^{\ast }$ is given by: 
\begin{equation*}
|R_{m}|^{mn-3}|R_{n}|^{mn-3}|R_{mn}|,
\end{equation*}%
where $R_{k}$ denotes the set of $k$-th roots of unity: $\{\omega \in
A~~:~~\omega ^{k}=1\}$.
\end{theorem}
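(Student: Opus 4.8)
The strategy is to count, by means of Theorem~\ref{tii}, the isomorphism classes as the number of distinct structure constants $C\colon G\times G\to A$ that arise from a standard basis of a $(1,2)$-symmetric algebra. By Theorem~\ref{tii}, two such algebras are graded-isomorphic precisely when their standard-basis structure constants coincide, so the count equals the number of admissible functions $C$. By Theorem~\ref{tiii}, every choice of values for the ``generating'' data $C(a^{r}b^{s},a)$ and $C(a^{r}b^{s},b)$ (for $0\le r\le m-1$, $0\le s\le n-1$) subject to the constraints in~(\ref{e50}) produces such an algebra, and by the first block of~(\ref{e50}) every admissible $C$ is determined by these values. Hence the plan reduces to a purely combinatorial enumeration: count the tuples $\bigl(C(a^{r}b^{s},a),\,C(a^{r}b^{s},b)\bigr)_{r,s}$ in $A$ satisfying~(\ref{e50}).

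\textbf{Step 1: isolate the free parameters.} First I would go through~(\ref{e50}) and decide which of the $C(a^{r}b^{s},a)$ and $C(a^{r}b^{s},b)$ are genuinely free and which are forced. The constraint $C(a^{r}b^{s},a)^{m}=1$ says each $C(a^{r}b^{s},a)\in R_{m}$; the cyclic-case relations $C(a^{r},a)^{m}=1$ and $C(b^{s},b)^{n}=1$ constrain the first row/column; the relations $C(a^{r}b^{s},b)^{-n}=\alpha_{r}=C(a,b)^{-rn}$ for $s\ne n-1$ and $C(a^{r}b^{n-1},b)^{n}=\alpha_{r}^{\,n-1}$ tie the $C(a^{r}b^{s},b)$ to the single quantity $C(a,b)$ up to an $n$-th (resp.\ suitably adjusted) root of unity; and $C(a,b)^{mn}=1$ says $C(a,b)\in R_{mn}$. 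I expect the bookkeeping to show that the independent data are: $C(a,b)$, contributing a factor $|R_{mn}|$; the ``$a$-type'' constants $C(a^{r}b^{s},a)$ for the $mn-?$ indices not already pinned down, each ranging over $R_{m}$; and the ``$b$-type'' constants $C(a^{r}b^{s},b)$ for the remaining free indices, each ranging over $R_{n}$ once $C(a,b)$ has been fixed (since the ratio of any two admissible values of $C(a^{r}b^{s},b)$ with the same $r$ lies in $R_{n}$, and across different $r$ the $n$-th power is prescribed). Careful counting of which $(r,s)$ are forced — the identity-row/column normalizations $C(1,-)=C(-,1)=1$, $C(a,a^{i})=1$, $C(b,a^{i}b^{k})=1$ for $k\le n-2$, and the value $C(b,a^{i}b^{n-1})=\alpha_i$ being determined — should leave exactly $mn-3$ free $a$-type parameters and $mn-3$ free $b$-type parameters, plus the one $R_{mn}$-parameter $C(a,b)$.

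\textbf{Step 2: assemble the product.} Once the free parameters are identified, the count is the product of the sizes of their ranges, giving $|R_{m}|^{mn-3}\,|R_{n}|^{mn-3}\,|R_{mn}|$. I would double-check the exponent $mn-3$ against the known cyclic case of~\cite{jwn}: setting $m=1$ collapses $\mathbb{Z}_m\times\mathbb{Z}_n$ to $\mathbb{Z}_n$, makes $R_m=\{1\}$ and $R_{mn}=R_n$, and the formula becomes $|R_n|^{n-3}\cdot|R_n| = |R_n|^{n-2}$, matching the stated result $|R_n|^{n-2}$; this sanity check both validates the exponent and pins down the ``$-3$'' (two from the standard-basis normalizations inherited from each cyclic factor, one more from the single global relation $C(a,b)^{mn}=1$ linking the two factors).

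\textbf{Main obstacle.} The delicate point is Step~1 — verifying that the constraints in~(\ref{e50}) are not over- or under-counted, i.e.\ that after imposing all of them the remaining data decompose cleanly as a direct product of a copy of $R_{mn}$ with $(mn-3)$ copies each of $R_m$ and $R_n$, with no hidden dependencies among the $C(a^{r}b^{s},a)$ for different $(r,s)$ and no further relations forced by the symmetry equation~(\ref{e52}) beyond those already recorded. In particular one must check that, given any admissible assignment of the $a$-type and $b$-type generators, the relations of the form $C(a^{r}b^{s},b)^{-n}=\alpha_r$ and $C(a^{r}b^{n-1},b)^{n}=\alpha_r^{n-1}$ are simultaneously solvable and that the solution set has exactly the claimed size (here the compatibility $\alpha_r^{m}=1$, equivalently $C(a,b)^{mn}=1$, is what guarantees solvability). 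This is a finite but somewhat intricate consistency check; everything else is a routine multiplication of cardinalities.
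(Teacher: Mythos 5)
Your proposal is correct and follows essentially the same route as the paper: reduce via Theorems \ref{tii} and \ref{tiii} to counting tuples $\bigl(C(a^{r}b^{s},a),C(a^{r}b^{s},b)\bigr)$ satisfying (\ref{e50}), observe that the three normalized values $C(1,a)=C(a,a)=C(b,a)=1$ leave $|R_{m}|^{mn-3}$ choices of the $a$-type constants, and parametrize the $b$-type constants as $C(a,b)\in R_{mn}$ times an $R_{n}$-valued correction for each of the remaining $(n-2)+\bigl((n-1)(m-1)-1\bigr)+(m-1)=mn-3$ indices. The only difference is that you leave this last tally as ``bookkeeping'' while the paper writes it out explicitly, but your identification of the constraints and your consistency check against the cyclic case confirm you would arrive at the same count.
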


\begin{proof}
From the discussion above, a~ $(1,2)$-symmetric~ $\mathbb{Z}_{m}\times
\mathbb{Z}_{n}$-graded twisted~ $\mathbb{C}$-algebra is determined, up to
graded isomorphisms, by the structure constant that is defined with respect
to a standard basis. In turn, this function is completely determined by all
possible choices of~ $C(a^{r}b^{s},a)$~ and~ $C(a^{r}b^{s},b)$, satisfying
the identities in~ (\ref{e50}). As~ $C(a^{r}b^{s},a)^{m}=1$~ for all~ $0\leq
r\leq m-1,~0\leq s\leq n-1,$~ and~
\begin{equation*}
1=C(1,a)=C(a,a)=C(b,a),
\end{equation*}%
~ then we see that there are~ $|R_{m}|^{mn-3}$~ possible choices for~ $%
C(a^{r}b^{s},a)$. Similarly, as~ $C(b^{s},b)^{n}=1$~ for~ $0\leq s\leq n-1$%
,~ and~ $C(1,b)=1=C(b,b)$,~ then~ $C(b^{s},b)$~ may be chosen in~ $%
|R_{n}|^{n-2}$~ possible ways. Since~ $C(a,b)^{mn}=1$,~ there are~ $|R_{mn}|$%
~ possible values for~ $C(a,b)$. In the case where~ $s\neq n-1$~ and~ $r\neq
0$,~ the identities in (\ref{e50}) tell us that~
\begin{equation*}
C(a^{r}b^{s},b)^{n}=C(a,b)^{rn}=(C(a,b)^{r})^{n}.
\end{equation*}%
~ Therefore,~ $C(a^{r}b^{s},b)=\omega C(a,b)^{r}$,~ where~ $\omega $~ is
some fixed~ $n$-th root of unity.~ Thus, there are~ $|R_{n}|^{(n-1)(m-1)-1}$%
~ possible choices for~ $C(a^{r}b^{s},b)$,~ if~ $s\neq n-1$~ and~ $r\neq 0$.

Finally, again by using (\ref{e50}) we obtain:~ $%
C(a^{r}b^{n-1},b)^{n}=(C(a,b)^{-r(n-1)})^{n}$,~ and therefore~ $%
C(a^{r}b^{n-1},b)=\omega C(a,b)^{-r(n-1)}$,~ where~ $\omega ^{n}=1$.~ Hence,
if~ $r\geq 1$,~ the value of~ $C(a^{r}b^{n-1},b)$~ can be chosen in~ $%
|R_{n}|^{m-1}$~ possible manners. In conclusion, the number of algebras
satisfying the hypothesis of the theorem is given by
\begin{equation*}
|R_{m}|^{mn-3}|R_{n}|^{n-2+(n-1)(m-1)-1+m-1}|R_{mn}|=|R_{m}|^{mn-3}|R_{n}|^{mn-3}|R_{mn}|.
\end{equation*}
\end{proof}

\begin{remark}
If $m$ and $n$ are relatively prime, then $\mathbb{Z}_{m}\times \mathbb{Z}%
_{n}\cong \mathbb{Z}_{mn}$, and since $|R_{mn}|=|R_{m}||R_{n}|$, the above
number is equal to $|R_{mn}|^{mn-2}$ which gives the correct number of
non-isomorphic algebras in the cyclic case, as provided in \cite{jwn}.
\end{remark}

Now, we discuss the general case of any finite abelian group, presented as $%
G=\mathbb{Z}_{n_{1}}\times \cdots \times \mathbb{Z}_{n_{k}}$. Since the
constructions are almost the same as the case of the product of two cyclic
groups, in the rest of this article we will limit ourselves to give a sketch
of the main arguments.

First, we start by defining a generalized standard basis for a $G$-graded
twisted algebra $W$. We proceed by induction on the number of factors.
Suppose $G=G_{1}\times \mathbb{Z}_{n_{k}}$, where $G_{1}=\mathbb{Z}%
_{n_{1}}\times \cdots \times \mathbb{Z}_{n_{k-1}}$. Fix $a_{i}\in \mathbb{Z}%
_{n_{i}}$ a generator. We define a standard basis for $W$ as a basis of the
form $\{w_{g,j}\}$, where $w_{e,j}=w_{a_{k}}^{(j)}$ ($e$ denotes the
identity element of $G_{1}$), as it was defined in the cyclic case, and
where $\{w_{g,0}=w_{g}~:~g\in G_{1}\}$ is a standard basis for $W$
restricted to $G_{1}$, and $w_{g,j}=w_{a_{k}}^{(1)}\cdot w_{g,j-1}$, for $%
g\neq e$ and $j\neq 0$. So for each $g\in G_{1}$ there is $\alpha _{g}\in A$
such that $w_{a_{k}}^{(1)}\cdot w_{g,n_{k}-1}=\alpha _{g}\cdot w_{g}$.

\begin{remark}
Notice that since $\mathcal{B}$ was defined recursively, its restriction to $%
H=\mathbb{Z}_{n_{1}}\times \cdots \times \mathbb{Z}_{n_{t}}$ with $1\leq
t\leq k$, is a standard basis for $W|_{H}$.
\end{remark}

Let $W$ be a $(1,2)$-symmetric $G$-graded twisted $\mathbb{C}$-algebra, with
standard basis $\mathcal{B}$ and structure constant $C:G\times G\rightarrow
A $. Consider as before $T_{a_{k}}:W\rightarrow W$ the linear transformation
given by $T_{a_{k}}(x)=w_{a_{k}}\cdot x$. For each $g\in G_{1}$, let $%
\{e_{g,j}\}$ denotes the set of $n_{k}$-th roots of $\alpha _{g}$. Notice
that $\alpha _{e}=1$ and therefore $\{e_{e,j}\}$ is the set of $n_{k}$-roots
of unity. Define 
\begin{equation}
z_{g,j}=\sum_{\mu =0}^{n_{k}-1}e_{g,j}^{-\mu }w_{g,\mu }.  \label{e56}
\end{equation}%
Then, as before, $z_{g,j}$ is an eigenvector of $T_{a_{k}}$ associated to
the eigenvalue $e_{g,j}$. Since $W$ is $(1,2)$-symmetric we have 
\begin{equation*}
T_{a_{k}}(T_{g^{\prime },s}(z_{g,j}))=q(a_{k},g^{\prime }\cdot
a_{k}^{s})\cdot e_{g,j}\cdot T_{g^{\prime },s}(z_{g,j}),
\end{equation*}%
(see \cite{jwn}). Therefore $T_{g^{\prime },s}(z_{g,j})$ is an eigenvector
of $T_{a_{k}}$ associated to the eigenvalue $q(a_{k},g^{\prime }\cdot
a_{k}^{s})\cdot e_{g,j}$. Here, $T_{g^{\prime },s}$ denotes the linear
transformation given by $T_{g^{\prime },s}(x)=w_{g^{\prime },s}\cdot x$.
Since 
\begin{equation}
T_{g^{\prime },s}(z_{g,j})=\sum_{\mu =0}^{n_{k}-1}e_{g,j}^{-\mu }\cdot
C(g^{\prime }\cdot a_{k}^{s},g\cdot a_{k}^{\mu })\cdot w_{g^{\prime
}g,[s+\mu ]}  \label{e57}
\end{equation}%
where $[~]$ denotes residue classes in $\mathbb{Z}_{n_{k}}$, then it holds
that 
\begin{equation}
T_{g^{\prime },s}(z_{g,j})=\eta _{g^{\prime },s}^{g,j}\cdot z_{g^{\prime
}g,l},~~\text{for some}~~0\leq l\leq n_{k}-1.  \label{e58}
\end{equation}%
Hence, 
\begin{equation}
q(a_{k},g^{\prime }\cdot a_{k}^{s})\cdot e_{g,j}=e_{g^{\prime }g,l}.
\label{e59}
\end{equation}%
This last equation is the generalization of equation (\ref{e31}). As in that
case, we may derive the following identities: If $%
g=a_{1}^{r_{1}}a_{2}^{r_{2}}\dots a_{k-1}^{r_{k-1}}$, then 
\begin{equation}
\begin{aligned} \alpha_{g} & = C(a_{1},a_{k})^{-r_{1}n_{k}} \cdots
C(a_{k-1},a_{k})^{-r_{k-1}n_{k}}\\ \alpha_{a_{i}} &
=C(a_{i},a_{k})^{-n_{k}}\\ C(a_{k},g \cdot a_{k}^{n_{k}-1}) & = \alpha_{g} =
C(g,a_{k})^{-n_{k}}\\ C(g \cdot a_{k}^{s},a_{k}) & = \omega_{g,s} \cdot
C(a_{1},a_{k})^{r_{1}} \cdots C(a_{k-1},a_{k})^{r_{k-1}},~~ \textrm{for}~~ s
\neq n_{k}-1,~~ \textrm{where}~~ \omega_{g,s}^{n_{k}}=1\\ C(g \cdot
a_{k}^{n_{k}-1},a_{k}) & = \omega_{g} \cdot C(a_{1},a_{k})^{-r_{1}(n_{k}-1)}
\cdots C(a_{k-1},a_{k})^{-r_{k-1}(n_{k}-1)}~~ \textrm{where}~~
\omega_{g}^{n_{k}}=1\\ C(a_{i},a_{k})^{n_{i}n_{k}} & = 1. \end{aligned}
\label{eiii}
\end{equation}%
Now, equations (\ref{e57}), (\ref{e58}) and (\ref{e59}) imply: 
\begin{eqnarray*}
\sum_{\mu =0}^{n_{k}-1}e_{g,j}^{-\mu }\cdot C(g^{\prime }\cdot
a_{k}^{s},g\cdot a_{k}^{\mu })\cdot w_{g^{\prime }g,[s+\mu ]} &=&\eta
_{g^{\prime },s}^{g,j}\cdot z_{g^{\prime }g,l} \\
&=&\sum_{\mu =0}^{n_{k}-1}\eta _{g^{\prime },s}^{g,j}\cdot e_{g^{\prime
}g,l}^{-\mu }\cdot w_{g^{\prime }g,\mu } \\
&=&\sum_{\mu =0}^{n_{k}-1}\eta _{g^{\prime },s}^{g,j}\cdot q(a_{k},g^{\prime
}\cdot a_{k}^{s})^{-\mu }\cdot e_{g,j}^{-\mu }\cdot w_{g^{\prime }g,\mu }
\end{eqnarray*}%
Similarly, as in the case of a product of two cyclic groups, the last
equation implies that for every $g,g^{\prime }\in G_{1}$, 
\begin{equation}
\begin{aligned} C(g' \cdot a_{k}^{s},g \cdot a_{k}^{l}) & = C(g' \cdot
a_{k}^{s},g) \cdot C(g' \cdot a_{k}^{s},a_{k})^{l},~~ \textrm{if}~~ 0 \leq l
< n_{k}-s.\\ C(g' \cdot a_{k}^{s},g \cdot a_{k}^{l}) & = C(g' \cdot
a_{k}^{s},g) \cdot C(g' \cdot a_{k}^{s},a_{k})^{l-n_{k}} \cdot \alpha_{g},~~
\textrm{if}~~ n_{k}-s \leq l \leq n_{k}-1;~~ s \neq 0,~~ s \neq n_{k}-1.\\
C(g' \cdot a_{k}^{n_{k}-1},g \cdot a_{k}^{l}) & = C(g' \cdot
a_{k}^{n_{k}-1},g) \cdot C(g' \cdot a_{k}^{n_{k}-1},a_{k})^{l-n_{k}} \cdot
\alpha_{g'}^{n_{k}-l} \cdot \alpha_{g}. \end{aligned}  \label{e70}
\end{equation}%
That proves the following theorem:

\begin{theorem}
\label{t1} Suppose that $G=\mathbb{Z}_{n_{1}}\times \mathbb{Z}_{n_{2}}\times
\cdots \times \mathbb{Z}_{n_{k}}$, and fix generators $a_{1},a_{2},\dots
,a_{k}$, $a_{i}\in \mathbb{Z}_{n_{i}}$. Suppose that $W$ is a $(1,2)$%
-symmetric $G$-graded twisted $\mathbb{C}$-algebra. If $\mathcal{B}$ is a
standard basis for $W$ with structure constant $C:G\times G\rightarrow 
\mathbb{C}^{\ast }$, then:\newline

If$~~0\leq l<n_{k}-s:$~\newline

$C(g^{\prime }\cdot a_{k}^{s},g\cdot a_{k}^{l})=C(g^{\prime }\cdot
a_{k}^{s},g)\cdot C(g^{\prime }\cdot a_{k}^{s},a_{k})^{l},$~\newline

If$~~n_{k}-s\leq l\leq n_{k}-1,~~s\neq 0,~~s\neq n_{k}-1:$~\newline

$C(g^{\prime }\cdot a_{k}^{s},g\cdot a_{k}^{l})=C(g^{\prime }\cdot
a_{k}^{s},g)\cdot C(g^{\prime }\cdot a_{k}^{s},a_{k})^{l-n_{k}}\cdot \alpha
_{g}.$~\newline

For $s=n_{k}-1:$~\newline

$C(g^{\prime }\cdot a_{k}^{n_{k}-1},g\cdot a_{k}^{l})=C(g^{\prime }\cdot
a_{k}^{n_{k}-1},g)\cdot C(g^{\prime }\cdot
a_{k}^{n_{k}-1},a_{k})^{l-n_{k}}\cdot \alpha _{g^{\prime }}^{n_{k}-l}\cdot
\alpha _{g}.$~\newline

If the element$~g~$is written as$~g=a_{1}^{r_{1}}a_{2}^{r_{2}}\dots
a_{k-1}^{r_{k-1}}~~$then:~\newline

$\alpha _{g}=C(a_{1},a_{k})^{-r_{1}n_{k}}\cdots
C(a_{k-1},a_{k})^{-r_{k-1}n_{k}}$

$\alpha _{a_{i}}=C(a_{i},a_{k})^{-n_{k}},$

$C(a_{k},g\cdot a_{k}^{n_{k}-1})=\alpha _{g}=C(g,a_{k})^{-n_{k}}.$~\newline

For$~~s\neq n_{k}-1,~~$where$~~\omega _{g,s}^{n_{k}}=1:$~\newline

$C(g\cdot a_{k}^{s},a_{k})=\omega _{g,s}\cdot C(a_{1},a_{k})^{r_{1}}\cdots
C(a_{k-1},a_{k})^{r_{k-1}},$

$C(g\cdot a_{k}^{n_{k}-1},a_{k})=\omega _{g}\cdot
C(a_{1},a_{k})^{-r_{1}(n_{k}-1)}\cdots C(a_{k-1},a_{k})^{-r_{k-1}(n_{k}-1)}$

$C(a_{i},a_{k})^{n_{i}n_{k}}=1.$
\end{theorem}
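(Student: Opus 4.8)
The plan is to reduce Theorem~\ref{t1} to the two-factor case that has already been treated in detail, using the inductive structure of the generalized standard basis. Write $G = G_1 \times \mathbb{Z}_{n_k}$ with $G_1 = \mathbb{Z}_{n_1} \times \cdots \times \mathbb{Z}_{n_{k-1}}$, and recall from the construction that $\mathcal{B}$ restricts to a standard basis of $W|_{G_1}$ and that, for the last factor, $w_{g,j} = w_{a_k}^{(1)} \cdot w_{g,j-1}$ with $w_{a_k}^{(1)} \cdot w_{g,n_k-1} = \alpha_g \, w_g$. The point is that every statement in the theorem involves only the ``last coordinate'' of an element in an essential way: the variable $g' \cdot a_k^s$ is multiplied by $g \cdot a_k^l$, and the formulas express $C(g' \cdot a_k^s, g \cdot a_k^l)$ in terms of $C(g' \cdot a_k^s, g)$, $C(g' \cdot a_k^s, a_k)$, and the scalars $\alpha_g$. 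So the argument is exactly the eigenvector-diagonalization argument already carried out for $\mathbb{Z}_m \times \mathbb{Z}_n$, with the cyclic factor $\mathbb{Z}_m$ replaced by the (no longer cyclic) group $G_1$ and $\mathbb{Z}_n$ replaced by $\mathbb{Z}_{n_k}$.

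Concretely, I would carry out the following steps. First, define $T_{a_k} : W \to W$ by $T_{a_k}(x) = w_{a_k} \cdot x$ and observe, exactly as in~\eqref{e29}, that $z_{g,j} = \sum_{\mu=0}^{n_k-1} e_{g,j}^{-\mu} w_{g,\mu}$ is an eigenvector of $T_{a_k}$ with eigenvalue $e_{g,j}$, where $e_{g,j}$ ranges over the $n_k$-th roots of $\alpha_g$; since $\alpha_e = 1$, the $e_{e,j}$ are the $n_k$-th roots of unity. Second, use $(1,2)$-symmetry together with the commutation relation $T_{g',s} \circ T_{a_k} = q(a_k, g' a_k^s)^{-1} T_{a_k} \circ T_{g',s}$ (as in~\cite{jwn}) to conclude that $T_{g',s}(z_{g,j})$ is again an eigenvector of $T_{a_k}$, hence a scalar multiple $\eta_{g',s}^{g,j} z_{g'g,l}$ of a basis eigenvector, with $q(a_k, g' a_k^s) e_{g,j} = e_{g'g,l}$. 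This is equations~\eqref{e58} and~\eqref{e59}. Third, raise the last equation to the $n_k$-th power to get $q(a_k, g' a_k^s)^{n_k} \alpha_g = \alpha_{g'g}$, and combine this with the standard-basis normalizations ($C(a_k, g a_k^s) = 1$ for $s \neq n_k - 1$ and $C(a_k, g a_k^{n_k-1}) = \alpha_g$, so that $q(a_k, g a_k^s) = C(g a_k^s, a_k)^{-1}$ when $s \neq n_k-1$) to derive the multiplicativity $\alpha_{g' g} = \alpha_{g'} \alpha_g$, hence $\alpha_g = \prod_i \alpha_{a_i}^{r_i}$ when $g = \prod_i a_i^{r_i}$, together with $\alpha_{a_i} = C(a_i, a_k)^{-n_k}$ and $C(a_i, a_k)^{n_i n_k} = 1$ from $\alpha_{a_i}^{n_i} = \alpha_e = 1$. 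This yields the block of $\alpha$-identities and the $\omega$-relations in the theorem. Fourth, expand $T_{g',s}(z_{g,j})$ two ways using~\eqref{e57} and~\eqref{e58}, match coefficients of $w_{g'g,\mu}$, set $\mu = s$ to solve for $\eta_{g',s}^{g,j}$, and substitute back; splitting into the ranges $0 \le l < n_k - s$ and $n_k - s \le l \le n_k - 1$ (with $s = n_k - 1$ handled separately because there $C(a_k, g a_k^{n_k-1}) = \alpha_g \neq 1$) gives precisely the three displayed formulas for $C(g' a_k^s, g a_k^l)$.

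The main obstacle is not any single hard estimate but rather the bookkeeping in making the induction airtight: one must check that the restriction of $\mathcal{B}$ to $G_1$ really is a standard basis (this is the Remark following Theorem~\ref{t1}'s setup and is where the recursive definition is used), so that the inductive hypothesis supplies all the identities internal to $G_1$ — in particular the facts that $C(g' a_k^s, g)$ for $g, g' \in G_1$ are governed by the lower-rank theorem and that $C(a_i, a_j)^{n_i n_j} = 1$ for $i, j \le k-1$. One also has to be careful that the index shifts $[s + \mu]$ modulo $n_k$ are handled correctly across the two coefficient ranges, and that the special role of $s = n_k - 1$ (where $\alpha_{g'}$ enters via the normalization $C(a_k, g' a_k^{n_k-1}) = \alpha_{g'}$) is tracked through the coefficient comparison. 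Once those are in place, the base case $k = 1$ is the cyclic result of~\cite{jwn} and the inductive step is the verbatim analogue of the $\mathbb{Z}_m \times \mathbb{Z}_n$ computation above, so the proof reduces to citing that computation with $G_1$ in place of the first cyclic factor.
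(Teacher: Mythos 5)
Your proposal follows essentially the same route as the paper: the paper itself only sketches this proof, reducing to the two-factor computation by writing $G = G_1 \times \mathbb{Z}_{n_k}$, using the recursively defined standard basis, the eigenvectors $z_{g,j}$ of $T_{a_k}$, the relation $q(a_k, g'a_k^s)e_{g,j} = e_{g'g,l}$, and the two-way expansion of $T_{g',s}(z_{g,j})$ — exactly the steps you outline, including the derivation of $\alpha_{g'g} = \alpha_{g'}\alpha_g$ and $C(a_i,a_k)^{n_i n_k}=1$ from $\alpha_{a_i}^{n_i}=\alpha_e=1$. The bookkeeping issues you flag (the restriction of $\mathcal{B}$ to $G_1$ being standard, the index shifts modulo $n_k$, the special case $s=n_k-1$) are precisely the points the paper handles via its Remark and the case split in (\ref{e70}), so your plan matches the intended argument.
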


As in the case of a product of two factors, the $(1,2)$-symmetry provides
some extra information about the structure constant $C$ that we summarize in
the following two lemmas. Their proofs follow the same lines as before and
will be omitted.

\begin{lemma}
\label{l2} Suppose that $G=\mathbb{Z}_{n_{1}}\times \mathbb{Z}_{n_{2}}\times
\cdots \times \mathbb{Z}_{n_{k}}$, and fix generators $a_{1},a_{2},\dots
,a_{k}$, $a_{i}\in \mathbb{Z}_{n_{i}}$. Suppose that $W$ is a $(1,2)$%
-symmetric $G$-graded twisted $\mathbb{C}$-algebra. If $\mathcal{B}$ is a
standard basis for $W$ with structure constant $C:G\times G\rightarrow 
\mathbb{C}^{\ast }$ then: 
\begin{equation*}
C(g_{1}\cdot a_{k}^{s},g_{2}g_{3})=C(g_{1}\cdot a_{k}^{s},g_{2})\cdot
C(g_{1}\cdot a_{k}^{s},g_{3})\cdot C(g_{2},g_{3})^{-1}\cdot
C(g_{2},g_{1}g_{3})\cdot C(g_{2},g_{1})^{-1}
\end{equation*}%
for every $g_{1},g_{2},g_{3}\in G_{1}=\mathbb{Z}_{n_{1}}\times \cdots \times 
\mathbb{Z}_{n_{k-1}}$.

Furthermore, if $g=a_{1}^{r_{1}}\cdots a_{i}^{r_{i}}\cdots
a_{k-1}^{r_{k-1}}, $ and if we denote $\widetilde{g}=a_{1}^{r_{1}}\cdots
a_{i-1}^{r_{i-1}}$, then 
\begin{equation*}
C(g\cdot a_{k}^{s},a_{i}^{j})=C(g\cdot a_{k}^{s},a_{i})^{j}\cdot C(a_{i},%
\widetilde{g}\cdot a_{i}^{r_{i}})^{-(j-1)}\cdot C(a_{i},\widetilde{g}\cdot
a_{i}^{r_{i}+1})\cdots C(a_{i},\widetilde{g}\cdot a_{i}^{r_{i}+j-1}).
\end{equation*}%
Hence, if $r_{i}=n_{i}-1:$ 
\begin{equation*}
C(g\cdot a_{k}^{s},a_{i}^{j})=C(g\cdot a_{k}^{s},a_{i})^{j}\cdot C(%
\widetilde{g},a_{i})^{(j-1)n_{i}}~~\text{if}~~r_{i}=n_{i}-1.
\end{equation*}%
On the other hand, if $r_{i}\neq n_{i}-1:$ 
\begin{equation*}
C(g\cdot a_{k}^{s},a_{i}^{j})=C(g\cdot a_{k}^{s},a_{i})^{j}\cdot C(%
\widetilde{g},a_{i})^{-n_{i}}~~\text{or}~~C(g\cdot
a_{k}^{s},a_{i}^{j})=C(g\cdot a_{k}^{s},a_{i})^{j}.
\end{equation*}%
In particular, 
\begin{equation*}
C(g\cdot a_{k}^{s},a_{i})^{n_{i}}=C(a_{i},\widetilde{g}\cdot
a_{i}^{r_{i}})^{n_{i}-1}\cdot C(a_{i},\widetilde{g}\cdot
a_{i}^{r_{i}+1})^{-1}\cdots C(a_{i},\widetilde{g}\cdot
a_{i}^{r_{i}+n_{i}-1})^{-1}.
\end{equation*}%
Therefore, 
\begin{equation*}
\begin{aligned} C(g \cdot a_{k}^{s},a_{i})^{n_{i}} & =
(C(\widetilde{g},a_{i})^{-(n_{i}-1)})^{n_{i}}~~ \textrm{if}~~
r_{i}=n_{i}-1,\\ C(g \cdot a_{k}^{s},a_{i})^{n_{i}} &
=C(\widetilde{g},a_{i})^{n_{i}}~~ \textrm{if}~~ r_{i} \neq n_{i}-1.
\end{aligned}
\end{equation*}
\end{lemma}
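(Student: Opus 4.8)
The plan is to re-run, with one extra block of cyclic factors to carry, the computation that produced (\ref{e34})--(\ref{e50}) in the two-factor case. First I would start from the $(1,2)$-symmetry $r(x,y,z)=r(y,x,z)$: writing $r(x,y,z)=C(y,z)C(xy,z)^{-1}C(x,yz)C(x,y)^{-1}$ and using that $G$ is abelian, the factor $C(xy,z)^{-1}=C(yx,z)^{-1}$ is common to both sides and cancels, so for all $x,y,z\in G$
\[
C(x,yz)=C(x,y)\,C(x,z)\,C(y,xz)\,C(y,x)^{-1}\,C(y,z)^{-1},
\]
which is the general-$k$ replacement of (\ref{e36}).

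For the first displayed identity of the lemma I would specialize $x=g_{1}a_{k}^{s}$, $y=g_{2}$, $z=g_{3}$ with $g_{1},g_{2},g_{3}\in G_{1}$. Then $xz=g_{1}g_{3}\cdot a_{k}^{s}$ and $x=g_{1}\cdot a_{k}^{s}$, and by the remark on restrictions of standard bases together with the $s=0$ instance of the first line of Theorem \ref{t1} (which reads $C(h,h'\cdot a_{k}^{t})=C(h,h')\,C(h,a_{k})^{t}$ for $h,h'\in G_{1}$) the factors $C(y,xz)=C(g_{2},g_{1}g_{3}a_{k}^{s})$ and $C(y,x)=C(g_{2},g_{1}a_{k}^{s})$ each split off the same power $C(g_{2},a_{k})^{s}$, so that $C(y,xz)C(y,x)^{-1}=C(g_{2},g_{1}g_{3})C(g_{2},g_{1})^{-1}$. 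Substituting into the identity above gives exactly
\[
C(g_{1}a_{k}^{s},g_{2}g_{3})=C(g_{1}a_{k}^{s},g_{2})\,C(g_{1}a_{k}^{s},g_{3})\,C(g_{2},g_{3})^{-1}\,C(g_{2},g_{1}g_{3})\,C(g_{2},g_{1})^{-1}.
\]

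For the ``furthermore'' I would fix $i<k$, write $g=\widetilde{g}\,a_{i}^{r_{i}}\,\widehat{g}$ with $\widetilde{g}\in\mathbb{Z}_{n_{1}}\times\cdots\times\mathbb{Z}_{n_{i-1}}$ and $\widehat{g}\in\mathbb{Z}_{n_{i+1}}\times\cdots\times\mathbb{Z}_{n_{k-1}}$, and induct on $j$. Applying the identity just obtained with first slot $g a_{k}^{s}$ and $g_{2}=a_{i}$, $g_{3}=a_{i}^{j-1}$, and using $C(a_{i},a_{i}^{u})=1$ (by the recursive construction of $\mathcal{B}$ its restriction to $\langle a_{i}\rangle$ is a cyclic standard basis) to delete one factor, the recursion telescopes to $C(g a_{k}^{s},a_{i}^{j})=C(g a_{k}^{s},a_{i})^{j}\cdot C(a_{i},g)^{-(j-1)}\cdot\prod_{u=1}^{j-1}C(a_{i},g a_{i}^{u})$. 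Now I would use that, $W$ being $(1,2)$-symmetric, $T_{a_{i}}\circ T_{a_{l}}=q(a_{i},a_{l})\,T_{a_{l}}\circ T_{a_{i}}$, so that left multiplication by $w_{a_{i}}$ commutes with the operator producing the $\widehat{g}$-direction in the standard basis up to a scalar $\lambda_{\widehat{g}}=\prod_{l>i}q(a_{i},a_{l})^{(\widehat{g})_{l}}$ that does not depend on the $a_{i}$-exponent; hence $C(a_{i},g a_{i}^{u})=\lambda_{\widehat{g}}\,C(a_{i},\widetilde{g}a_{i}^{r_{i}+u})$ for all $u\geq0$. The $j-1$ copies of $\lambda_{\widehat{g}}$ then cancel the $j-1$ copies of $\lambda_{\widehat{g}}^{-1}$ hidden in $C(a_{i},g)^{-(j-1)}$, leaving precisely
\[
C(g a_{k}^{s},a_{i}^{j})=C(g a_{k}^{s},a_{i})^{j}\cdot C(a_{i},\widetilde{g}a_{i}^{r_{i}})^{-(j-1)}\cdot C(a_{i},\widetilde{g}a_{i}^{r_{i}+1})\cdots C(a_{i},\widetilde{g}a_{i}^{r_{i}+j-1}).
\]

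Finally, setting $j=n_{i}$ makes the left side $C(g a_{k}^{s},e)=1$, and solving for $C(g a_{k}^{s},a_{i})^{n_{i}}$ gives the ``In particular'' formula; the remaining ``Therefore'' cases follow by evaluating its factors with $C(a_{i},\widetilde{g}a_{i}^{u})=1$ for $u\not\equiv n_{i}-1$ and $C(a_{i},\widetilde{g}a_{i}^{n_{i}-1})=C(\widetilde{g},a_{i})^{-n_{i}}$ (the standard-basis rule on $W|_{\mathbb{Z}_{n_{1}}\times\cdots\times\mathbb{Z}_{n_{i}}}$ together with Theorem \ref{t1} applied to that subgroup), noting that the exponents $r_{i},\dots,r_{i}+n_{i}-1$ form a complete residue system mod $n_{i}$ so exactly one factor is nontrivial, and separating the subcases $r_{i}=n_{i}-1$ (the leading factor is the special one, giving $(C(\widetilde{g},a_{i})^{-(n_{i}-1)})^{n_{i}}$, and for general $j$ the correction collapses to $C(\widetilde{g},a_{i})^{(j-1)n_{i}}$) and $r_{i}\neq n_{i}-1$ (one of the inverted factors is the special one, giving $C(\widetilde{g},a_{i})^{n_{i}}$, and for general $j$ the correction is $C(\widetilde{g},a_{i})^{-n_{i}}$ or $1$). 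The step I expect to be the main obstacle is the telescoping in the previous paragraph: one must keep separate track of the $a_{k}^{s}$-contribution (removed via Theorem \ref{t1}) and the $\widehat{g}$-contribution (the scalar $\lambda_{\widehat{g}}$, which appears in every factor individually but cancels only because it occurs with balanced exponents $\pm(j-1)$ in the telescoped product), so that what survives is exactly the data of the subgroup $\mathbb{Z}_{n_{1}}\times\cdots\times\mathbb{Z}_{n_{i}}$; this uses the $q$-commutation of the operators $T_{a_{i}}$ and the recursive definition of the standard basis, and is the one genuinely new point compared with the $\mathbb{Z}_{m}\times\mathbb{Z}_{n}$ computation, the rest being the same cocycle bookkeeping.
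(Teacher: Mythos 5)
Your proposal is correct and follows exactly the route the paper intends for Lemma \ref{l2} (whose proof the paper omits, saying only that it "follows the same lines" as the two-factor computation): cancel the common factor $C(xy,z)^{-1}=C(yx,z)^{-1}$ in the symmetry relation $r(x,y,z)=r(y,x,z)$, strip the $a_{k}^{s}$ from the right-hand slots via the first identity of Theorem \ref{t1}, telescope in $j$, and evaluate the resulting factors using the recursive standard-basis structure. The one point you flag as delicate --- the $u$-independence of the scalar relating $C(a_{i},g\cdot a_{i}^{u})$ to $C(a_{i},\widetilde{g}\cdot a_{i}^{r_{i}+u})$ --- is indeed the only genuinely new ingredient beyond the $\mathbb{Z}_{m}\times\mathbb{Z}_{n}$ case, and your $q$-commutation argument for it is sound (since only $u$-independence is used, it can equivalently be read off by applying Theorem \ref{t1} to the restricted subalgebras $W|_{\mathbb{Z}_{n_{1}}\times\cdots\times\mathbb{Z}_{n_{t}}}$, whose bases are standard by the Remark, peeling off one factor $C(a_{i},a_{l})^{r_{l}}$ per generator $a_{l}$ with $l>i$).
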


\begin{remark}
From the Theorem \ref{t1} we know that 
\begin{equation*}
C(\widetilde{g},a_{i})= \omega_{\widetilde{g}} \cdot C(a_{1},a_{i})^{r_{1}}
\cdots C(a_{i-1},a_{i})^{r_{i-1}},~~ \text{where}~~ \omega_{\widetilde{g}%
}^{n_{i}}=1.
\end{equation*}
Therefore, the last lemma implies that 
\begin{equation*}
C(g \cdot a_{k}^{s},a_{i})^{n_{i}}=(C(a_{1},a_{i})^{r_{1}} \cdot
C(a_{i-1},a_{i})^{r_{i-1}})^{-(n_{i}-1)n_{i}},~~ \text{if}~~ r_{i}=n_{i}-1,
\end{equation*}
and that 
\begin{equation*}
C(g \cdot a_{k}^{s},a_{i})^{n_{i}}=(C(a_{1},a_{i})^{r_{1}} \cdot
C(a_{i-1},a_{i})^{r_{i-1}})^{n_{i}},~~ \text{if}~~ r_{i} \neq n_{i}-1.
\end{equation*}
Hence, 
\begin{equation}  \label{e74}
\begin{aligned} C(g \cdot a_{k}^{s},a_{i}) & = \omega_{g,s}
C(a_{1},a_{i})^{r_{1}} \cdot C(a_{i-1},a_{i})^{r_{i-1}}~~ \textrm{if}~~
r_{i} \neq n_{i}-1,~~ \textrm{where}~~ \omega_{g,s}^{n_{i}}=1,\\ C(g \cdot
a_{k}^{s},a_{i}) & = \omega_{g,s} \cdot (C(a_{1},a_{i})^{r_{1}} \cdot
C(a_{i-1},a_{i})^{r_{i-1}})^{-(n_{i}-1)},~~ \textrm{if}~~ r_{i}= n_{i}-1,~~
\textrm{where}~~ \omega_{g,s}^{n_{i}}=1. \end{aligned}
\end{equation}
\end{remark}

\begin{lemma}
\label{l3} Let $G=\mathbb{Z}_{n_{1}}\times \cdots \times \mathbb{Z}_{n_{k}}$%
, and fix generators $a_{1},a_{2},\dots ,a_{k}$, $a_{i}\in \mathbb{Z}%
_{n_{i}} $. Suppose that $W$ is a $(1,2)$-symmetric $G$-graded twisted $%
\mathbb{C}$-algebra. Then the constants $C(g\cdot a_{k}^{s},g^{\prime })$
always can be expressed in terms of the constants $C(g\cdot a_{k}^{s},a_{i})$%
, $1\leq i\leq k-1$ and $C(a_{j},a_{t})$, $1\leq j,t\leq k-1$, for every $%
g,g^{\prime }\in G_{1}=\mathbb{Z}_{n_{1}}\times \cdots \times \mathbb{Z}%
_{n_{k-1}}$.
\end{lemma}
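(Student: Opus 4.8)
The plan is to argue by induction on the word length of $g'$ as written in the generators $a_{1},\dots,a_{k-1}$ of $G_{1}$, using the multiplicativity relation established in Lemma \ref{l2} as the engine that peels off one generator at a time. The base cases are $g'=e$, where $C(g\cdot a_{k}^{s},e)=1$, and $g'=a_{i}$ for a single $i$, where the constant $C(g\cdot a_{k}^{s},a_{i})$ is already one of the allowed building blocks. For the inductive step, write $g'=g'_{2}g'_{3}$ as a product of two shorter elements of $G_{1}$ (for instance split off the last generator appearing in $g'$, so that $g'_{3}=a_{i}^{j}$ for a suitable $i$ and $j$, and $g'_{2}$ contains only $a_{1},\dots,a_{i}$); then Lemma \ref{l2} gives
\begin{equation*}
C(g\cdot a_{k}^{s},g'_{2}g'_{3})=C(g\cdot a_{k}^{s},g'_{2})\cdot C(g\cdot a_{k}^{s},g'_{3})\cdot C(g'_{2},g'_{3})^{-1}\cdot C(g'_{2},g'_{1}g'_{3})\cdot C(g'_{2},g'_{2})^{-1},
\end{equation*}
wait --- more precisely one uses the displayed formula of Lemma \ref{l2} with $g_{1}=g$, $g_{2}=g'_{2}$, $g_{3}=g'_{3}$, which expresses $C(g\cdot a_{k}^{s},g'_{2}g'_{3})$ in terms of $C(g\cdot a_{k}^{s},g'_{2})$, $C(g\cdot a_{k}^{s},g'_{3})$, and several constants of the form $C(h_{1},h_{2})$ with $h_{1},h_{2}\in G_{1}$.

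Next I would dispose of the two "mixed" type factors. The factor $C(g\cdot a_{k}^{s},g'_{3})=C(g\cdot a_{k}^{s},a_{i}^{j})$ is handled by the second block of Lemma \ref{l2}, which rewrites it as a power of $C(g\cdot a_{k}^{s},a_{i})$ times a product of constants $C(a_{i},\widetilde{g}\cdot a_{i}^{r_{i}+\nu})$; applying Theorem \ref{t1} (the formula $C(h\cdot a_{k}^{0},a_{i})=\omega\, C(a_{1},a_{i})^{r_{1}}\cdots C(a_{i-1},a_{i})^{r_{i-1}}$ with the corresponding root of unity) each of those is expressed purely in terms of the $C(a_{j},a_{t})$, $1\le j,t\le k-1$. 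The factor $C(g\cdot a_{k}^{s},g'_{2})$ has a strictly shorter second argument than $g'$, so the induction hypothesis applies to it directly. Finally, the remaining factors $C(g'_{2},g'_{3})$, $C(g'_{2},g g'_{3})$, $C(g'_{2},g)$ — and, if one prefers to be uniform, any further purely-$G_{1}$ constants produced — are structure constants of the restricted algebra $W|_{G_{1}}$, which by the inductive setup (or by the same lemma applied recursively, since $W|_{G_{1}}$ is again $(1,2)$-symmetric with a standard basis) reduce to the constants $C(a_{j},a_{t})$ with $1\le j,t\le k-1$.

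Assembling these reductions shows that $C(g\cdot a_{k}^{s},g')$ is a Laurent monomial in the constants $C(g\cdot a_{k}^{s},a_{i})$ ($1\le i\le k-1$) and $C(a_{j},a_{t})$ ($1\le j,t\le k-1$), completing the induction. The main obstacle I anticipate is purely bookkeeping: one must fix an unambiguous normal form for elements of $G_{1}$ (say $g'=a_{1}^{r_{1}}\cdots a_{k-1}^{r_{k-1}}$ with $0\le r_{i}<n_{i}$) and check that the splitting $g'=g'_{2}g'_{3}$ always produces a $g'_{2}$ that is genuinely shorter and still in normal form, so that the induction is well-founded; the "exponent $n_{i}-1$ versus $r_{i}\ne n_{i}-1$" dichotomy in Lemma \ref{l2} and the remark after it must be carried along, but it does not affect the conclusion since in every case the output lies in the claimed set of generators. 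No single computation here is deep; the content is simply that Lemma \ref{l2} and Theorem \ref{t1} together suffice to collapse everything onto the asserted generating set.
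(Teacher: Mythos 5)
Your argument is correct and matches the paper's intent: the paper omits the proof of Lemma \ref{l3}, saying only that it follows the same lines as the two-factor case, and your induction on the word length of $g'$ via the product formula of Lemma \ref{l2}, with the purely-$G_{1}$ correction terms $C(g_2',g_3')$, $C(g_2',g\,g_3')$, $C(g_2',g)$ absorbed by recursion on the number of cyclic factors (using that the restriction of a standard basis to $G_{1}$ is again standard and that $(1,2)$-symmetry restricts), is exactly that generalization. One small imprecision: the factors $C(a_i,\widetilde g\cdot a_i^{r_i+\nu})$ coming from the second block of Lemma \ref{l2} have the generator in the \emph{first} slot, so they are not directly of the form treated by (\ref{e74}); but your fallback of classifying them as purely-$G_{1}$ constants to be handled by the recursion already disposes of them.
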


The following theorem summarizes the above discussion:

\begin{theorem}
\label{t2} Let $G$ be presented as $\mathbb{Z}_{n_{1}}\times \mathbb{Z}%
_{n_{2}}\times \cdots \times \mathbb{Z}_{n_{k}}$, and fix generators $%
a_{1},a_{2},\dots ,a_{k}$, $a_{i}\in \mathbb{Z}_{n_{i}}$. Suppose that $W$
is a $(1,2)$-symmetric $G$-graded twisted $\mathbb{C}$-algebra. If $\mathcal{%
B}$ is a standard basis for $W$ with structure constant $C:G\times
G\rightarrow \mathbb{C}^{\ast }$, then the values of $C(g\cdot
a_{k}^{s},a_{i}),C(g\cdot a_{k}^{s},a_{k})$ and $C(a_{t},a_{j})$, with $%
1\leq i,t,j\leq k-1$, $g\in G_{1}=\mathbb{Z}_{n_{1}}\times \cdots \times 
\mathbb{Z}_{n_{k-1}}$ completely determine the structure constant $C$.
Furthermore, the following identities generalize the ones obtained in (\ref%
{e50}):\newline

If$~~0\leq l<n_{k}-s,$ then ~\newline

$C(g\cdot a_{k}^{s},g^{\prime }\cdot a_{k}^{l})=C(g\cdot a_{k}^{s},g^{\prime
})\cdot C(g\cdot a_{k}^{s},a_{k})^{l}.$\newline

If$~~n_{k}-s\leq l\leq n_{k}-1,~~s\neq 0,~~s\neq n_{k}-1,$ then \newline

$C(g\cdot a_{k}^{s},g^{\prime }\cdot a_{k}^{l})=C(g\cdot a_{k}^{s},g^{\prime
})\cdot C(g\cdot a_{k}^{s},a_{k})^{l-n_{k}}\cdot \alpha _{g^{\prime }}.$%
\newline

If $s=n_{k}-1$ then\newline

$C(g\cdot a_{k}^{n_{k}-1},g^{\prime }\cdot a_{k}^{l})=C(g\cdot
a_{k}^{n_{k}-1},g^{\prime })\cdot C(g\cdot
a_{k}^{n_{k}-1},a_{k})^{l-n_{k}}\cdot \alpha _{g}^{n_{k}-l}\cdot \alpha
_{g^{\prime }}.$ \newline

If$~g~$is written as$~g=a_{1}^{r_{1}}a_{2}^{r_{2}}\dots a_{k-1}^{r_{k-1}}~~$%
then\newline

$\alpha _{g}=C(a_{1},a_{k})^{-r_{1}n_{k}}\cdots
C(a_{k-1},a_{k})^{-r_{k-1}n_{k}}$

$\alpha _{a_{i}}=C(a_{i},a_{k})^{-n_{k}}$

$C(a_{i},a_{k})^{n_{i}n_{k}}=1.$

$C(a_{k},g\cdot a_{k}^{n_{k}-1})=\alpha _{g}=C(g,a_{k})^{-n_{k}}$

$C(g\cdot a_{k}^{n_{k}-1},a_{k})=\omega _{g}\cdot
C(a_{1},a_{k})^{-r_{1}(n_{k}-1)}\cdots
C(a_{k-1},a_{k})^{-r_{k-1}(n_{k}-1)},~~$where$~~\omega _{g}^{n_{k}}=1.$%
\newline

For the case where$~~s\neq 0~$and$~g\neq a_{j},~$for$~j=1,2,\dots ,k:$%
\newline

$C(g\cdot a_{k}^{s},a_{k})=\omega _{g,s}\cdot C(a_{1},a_{k})^{r_{1}}\cdots
C(a_{k-1},a_{k})^{r_{k-1}},~~for~~s\neq n_{k}-1,~~where~~\omega
_{g,s}^{n_{k}}=1.$\newline

For the case$~~s\neq 0~$and$~g\neq a_{j}~$for$~j=1,2,\dots ,k-1$, and$%
~i=1,2,\dots ,k-1~$we have$:$\newline

$C(g\cdot a_{k}^{s},a_{i})=\omega _{g,s}C(a_{1},a_{i})^{r_{1}}\cdots
C(a_{i-1},a_{i})^{r_{i-1}}~~$if$~~r_{i}\neq n_{i}-1,~~$where$~~\omega
_{g,s}^{n_{i}}=1.$

$C(g\cdot a_{k}^{s},a_{i})=\omega _{g,s}\cdot (C(a_{1},a_{i})^{r_{1}}\cdots
C(a_{i-1},a_{i})^{r_{i-1}})^{-(n_{i}-1)},~~$if$~~r_{i}=n_{i}-1,~~$where$%
~~\omega _{g,s}^{n_{i}}=1.$\newline
\end{theorem}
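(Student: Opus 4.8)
The plan is to argue by induction on the number $k$ of cyclic factors, taking as base cases $k=1$ (the cyclic case of \cite{jwn}) and $k=2$ (where the statement is exactly the list (\ref{e50})). Throughout I would write a generic element of $G$ as $g\cdot a_{k}^{s}$ with $g\in G_{1}=\mathbb{Z}_{n_{1}}\times\cdots\times\mathbb{Z}_{n_{k-1}}$ and $0\le s\le n_{k}-1$, and use the remark that the restriction of a standard basis is standard: thus $\mathcal{B}|_{G_{1}}$ is a standard basis for the $(1,2)$-symmetric algebra $W|_{G_{1}}$, and the inductive hypothesis applies to $C|_{G_{1}\times G_{1}}$. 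In particular the constants $C(a_{t},a_{j})$ with $1\le t,j\le k-1$ may legitimately be regarded as part of the determining data.

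The first step would be to reduce the second argument: Theorem \ref{t1} (equivalently the identities (\ref{e70})) already expresses an arbitrary $C(g_{1}\cdot a_{k}^{s},\,g_{2}\cdot a_{k}^{l})$ as a product of $C(g_{1}\cdot a_{k}^{s},g_{2})$, a power of $C(g_{1}\cdot a_{k}^{s},a_{k})$, and the scalars $\alpha_{g_{1}},\alpha_{g_{2}}$, with the precise shape dictated by which of the three ranges $0\le l<n_{k}-s$, $n_{k}-s\le l\le n_{k}-1$ with $s\ne 0,n_{k}-1$, or $s=n_{k}-1$ one is in. Here $C(g_{1}\cdot a_{k}^{s},a_{k})$ is already admitted data, and for $g=a_{1}^{r_{1}}\cdots a_{k-1}^{r_{k-1}}$ one has $\alpha_{g}=C(a_{1},a_{k})^{-r_{1}n_{k}}\cdots C(a_{k-1},a_{k})^{-r_{k-1}n_{k}}$ by Theorem \ref{t1}, with each $C(a_{i},a_{k})=C(a_{i}\cdot a_{k}^{0},a_{k})$ admitted data subject to $C(a_{i},a_{k})^{n_{i}n_{k}}=1$. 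So after this step everything is reduced to constants of the form $C(g_{1}\cdot a_{k}^{s},g_{2})$ with $g_{1},g_{2}\in G_{1}$, together with admitted data.

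The second step would invoke Lemma \ref{l3}, which reduces $C(g_{1}\cdot a_{k}^{s},g_{2})$ to the $C(g_{1}\cdot a_{k}^{s},a_{i})$ with $1\le i\le k-1$ and the $C(a_{j},a_{t})$ with $1\le j,t\le k-1$ --- all admitted. Concretely this runs through the cocycle-type identity of Lemma \ref{l2} and its evaluation of $C(g_{1}a_{k}^{s},a_{i}^{\,j})$, peeling off the factors of $g_{2}=a_{1}^{r_{1}}\cdots a_{k-1}^{r_{k-1}}$ one at a time. Composing the two reductions exhibits every value of $C$ as a Laurent monomial in the finitely many admitted constants, which is the first assertion. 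The ``furthermore'' identities then require no new work: the three range-dependent formulas are the displayed identities of Theorem \ref{t1}; the formulas for $\alpha_{g}$, $\alpha_{a_{i}}$, $C(a_{i},a_{k})^{n_{i}n_{k}}=1$, $C(a_{k},g\cdot a_{k}^{n_{k}-1})=\alpha_{g}=C(g,a_{k})^{-n_{k}}$ and $C(g\cdot a_{k}^{n_{k}-1},a_{k})$ come from Theorem \ref{t1} and (\ref{eiii}); and the last two lines, giving $C(g\cdot a_{k}^{s},a_{k})$ for $s\ne n_{k}-1$ and $C(g\cdot a_{k}^{s},a_{i})$ in the cases $r_{i}\ne n_{i}-1$ and $r_{i}=n_{i}-1$, are precisely equation (\ref{e74}) from the remark following Lemma \ref{l2}.

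The genuinely delicate point has in fact already been settled in Lemmas \ref{l2} and \ref{l3} (and in their two-factor prototypes that produced (\ref{e50})): that the symmetry constraint (\ref{e36}) forces the a priori over-determined system to collapse consistently to these monomial relations, and that the peeling recursion in Lemma \ref{l3} is well-founded. Granting those, the main obstacle left in Theorem \ref{t2} is purely organizational --- carefully tracking the case split over the ranges of $s$, $l$ and the exponents $r_{i}$ so that the reductions chain together without circularity --- which I expect to be routine but somewhat tedious.
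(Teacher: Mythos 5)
Your proposal matches the paper's own treatment: Theorem \ref{t2} is presented there with no separate proof, being explicitly introduced as a summary that assembles Theorem \ref{t1}, Lemmas \ref{l2} and \ref{l3}, and equation (\ref{e74}), which is exactly the two-step reduction (first in the $a_{k}$-power of the second argument, then peeling off the $G_{1}$-factors) that you describe. Your added inductive scaffolding on $k$ via the remark that restrictions of standard bases are standard is also how the paper organizes the general case, so this is essentially the same argument.
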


Before starting our next theorem we notice that as in Theorem \ref{tii}
before two $(1,2)$-symmetric $\mathbb{Z}_{n_{1}}\times \cdots \times \mathbb{%
Z}_{n_{k}}$-graded twisted algebras are graded-isomorphic if and only if
they have the same structure constants in their respective standard bases.
The following theorem generalizes Theorem \ref{tiii} when $G$ is a product
of an arbitrary number of cyclic groups.

\begin{theorem}
Let $G=\mathbb{Z}_{n_{1}}\times \cdots \times \mathbb{Z}_{n_{k}}$ and let $%
A\subset \mathbb{C}^{\ast }$ be a finite subgroup. Suppose that we choose
values in $A$ for $C(g\cdot a_{k}^{s},a_{k})$, $C(a_{i},a_{j})$ and $%
C(g\cdot a_{k}^{s},a_{i})$ satisfying the identities in Theorem \ref{t2}.
Then $W=\mathbb{C}^{n_{1}}\times \cdots \times \mathbb{C}^{n_{k}}$ with the
multiplication given by $C$ (referred to the canonical basis of $\mathbb{C}%
^{n_{1}}\times \cdots \times \mathbb{C}^{n_{k}}$) is a $(1,2)$-symmetric $G$%
-graded twisted $\mathbb{C}$- algebra.
\end{theorem}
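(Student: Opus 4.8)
The plan is to argue by induction on the number $k$ of cyclic factors, the base cases $k=1$ and $k=2$ being the cyclic case treated in \cite{jwn} and Theorem \ref{tiii}, respectively. So write $G=G_{1}\times \mathbb{Z}_{n_{k}}$ with $G_{1}=\mathbb{Z}_{n_{1}}\times \cdots \times \mathbb{Z}_{n_{k-1}}$, fix generators $a_{1},\dots ,a_{k}$, and suppose we are given values in $A$ for $C(g\cdot a_{k}^{s},a_{k})$, $C(a_{i},a_{j})$ and $C(g\cdot a_{k}^{s},a_{i})$ (with $1\le i,j\le k-1$ and $g\in G_{1}$) satisfying all the identities listed in Theorem \ref{t2}.

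First I would extend this data to a function $C\colon G\times G\to A$. By Lemma \ref{l3} the numbers $C(g\cdot a_{k}^{s},g')$ with $g,g'\in G_{1}$ are forced to be specific monomials in the chosen data; take those expressions as definitions, and then define $C(g\cdot a_{k}^{s},g'\cdot a_{k}^{l})$ by the three branch formulas of Theorem \ref{t2}, according to the position of $l$ relative to $n_{k}-s$. That these definitions are consistent — independent of how a group element is written in the form $g\cdot a_{k}^{s}$ with $0\le s\le n_{k}-1$, and agreeing at the boundary values $l=0$, $l=n_{k}-s$, $s=n_{k}-1$ — is a routine check using the normalizations $\alpha _{g}=C(g,a_{k})^{-n_{k}}$, $\alpha _{a_{i}}=C(a_{i},a_{k})^{-n_{k}}$, $C(a_{i},a_{k})^{n_{i}n_{k}}=1$, etc., from Theorem \ref{t2}. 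Moreover, the restriction $C_{1}:=C|_{G_{1}\times G_{1}}$ is built from data satisfying the hypotheses of Theorem \ref{t2} for the smaller group $G_{1}$, so by the inductive hypothesis $W_{1}:=\bigoplus_{g\in G_{1}}\mathbb{C}\,w_{g}$ with multiplication $C_{1}$ is a $(1,2)$-symmetric $G_{1}$-graded twisted $\mathbb{C}$-algebra; in particular $C_{1}$ satisfies Lemma \ref{l2} and the $(1,2)$-symmetry functional equation restricted to $G_{1}$.

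Next, $W=\mathbb{C}^{n_{1}}\times \cdots \times \mathbb{C}^{n_{k}}$ with product $w_{x}\cdot w_{y}=C(x,y)\,w_{xy}$ on the canonical basis is automatically a $G$-graded twisted algebra: the graded components are one-dimensional, $C$ takes values in $A\subset \mathbb{C}^{\ast }$ so no monomial is a zero divisor, and the standard-basis normalizations contained in Theorem \ref{t2} give $C(e,x)=C(x,e)=1$, so $w_{e}$ is a two-sided identity. Hence everything reduces to the $(1,2)$-symmetry. Since $r$ does not depend on the chosen basis and $r(x,y,z)=C(y,z)C(xy,z)^{-1}C(x,yz)C(x,y)^{-1}$, the condition $r(x,y,z)=r(y,x,z)$ for all $x,y,z\in G$ is exactly the identity (\ref{e36}) for all triples. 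Writing $x=g_{1}a_{k}^{s_{1}}$, $y=g_{2}a_{k}^{s_{2}}$, $z=g_{3}a_{k}^{s_{3}}$ with $g_{1},g_{2},g_{3}\in G_{1}$ and $0\le s_{1},s_{2},s_{3}\le n_{k}-1$, I would substitute the branch formulas of Theorem \ref{t2} into (\ref{e36}). Every factor then splits as the product of a term of the form $C(g_{i}a_{k}^{s_{i}},g)$ with second argument in $G_{1}$, a power of a suitable $C(g_{i}a_{k}^{s_{i}},a_{k})$, and a wrap-around correction assembled from the $\alpha_{g}$'s exactly as in the function $f$ of the proof of Theorem \ref{tiii}; and Lemma \ref{l2} in turn expresses each $C(g_{i}a_{k}^{s_{i}},g)$ through the chosen data $C(g_{i}a_{k}^{s_{i}},a_{t})$ and through $C_{1}$ on $G_{1}$. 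Grouping these factors, (\ref{e36}) decouples into the $(1,2)$-symmetry identity for $W_{1}$ — true by induction — together with an identity among the $\alpha$-corrections and the $C(\cdot,a_{k})$-powers; this second identity is the higher-rank analogue of (\ref{e52}) and is verified by the same finite case analysis as in Theorem \ref{tiii}, splitting on the positions of $s_{2}+s_{3}$, $s_{1}+s_{3}$, $[s_{2}+s_{3}]+s_{1}$ and $[s_{1}+s_{3}]+s_{2}$ relative to $n_{k}$, and plugging the identities of Theorem \ref{t2} into each branch. As there are finitely many cases and each is elementary, the induction closes.

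The step I expect to be the \emph{real} obstacle is this last decoupling: arranging the substitution so that the $G_{1}$-contributions collect precisely into the inductive $(1,2)$-symmetry identity for $W_{1}$, with everything involving $a_{k}$ (the $C(\cdot,a_{k})$-powers, the $\omega$'s and the $\alpha_{g}$'s) separating out cleanly. The tool that makes this work is Lemma \ref{l2} — the cocycle-type relation $C(g_{1}a_{k}^{s},g_{2}g_{3})=C(g_{1}a_{k}^{s},g_{2})\,C(g_{1}a_{k}^{s},g_{3})\,C(g_{2},g_{3})^{-1}\,C(g_{2},g_{1}g_{3})\,C(g_{2},g_{1})^{-1}$ — which is exactly what lets one recognize the residual $G_{1}$-identity after substitution; the remaining $\alpha$/wrap-around bookkeeping is routine but, as in Theorem \ref{tiii}, must be carried out case by case.
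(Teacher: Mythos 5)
The paper gives no proof of this theorem at all: it is stated bare, after the authors announce that for general $k$ they ``will limit ourselves to give a sketch of the main arguments'' because the constructions parallel the two-factor case. So your proposal is not competing with an existing argument; it is supplying one, and the route you take --- induction on the number of cyclic factors with Theorem \ref{tiii} as the base case, Lemma \ref{l3} and the branch formulas of Theorem \ref{t2} used to \emph{define} the full function $C$, and the symmetry identity (\ref{e36}) decoupled into the inductive $G_{1}$-identity plus a wrap-around identity in the $\alpha_{g}$'s and the powers of $C(\cdot,a_{k})$ checked by the same finite case analysis as in (\ref{e52}) --- is exactly the elaboration the paper's sketch points to. Two cautions on the parts you defer as ``routine.'' First, Lemma \ref{l2}'s relation $C(g_{1}a_{k}^{s},g_{2}g_{3})=C(g_{1}a_{k}^{s},g_{2})C(g_{1}a_{k}^{s},g_{3})C(g_{2},g_{3})^{-1}C(g_{2},g_{1}g_{3})C(g_{2},g_{1})^{-1}$ is itself an instance of $r(x,y,z)=r(y,x,z)$ rearranged, so when you adopt it as a definition you must verify that the resulting $C(g\cdot a_{k}^{s},g')$ is independent of the chosen factorization of $g'$ into generators --- this well-definedness check is where the remaining instances of (\ref{e36}) with all three $G_{1}$-components nontrivial actually get used, and it is the one step with genuine content that neither you nor the paper writes out. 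Second, in the decoupling you need the restriction $C_{1}=C|_{G_{1}\times G_{1}}$ to satisfy the hypotheses of Theorem \ref{t2} for $G_{1}$ (with $a_{k-1}$ playing the role of $a_{k}$); this follows from the recursive definition of a standard basis, as the paper's Remark on restrictions to subproducts notes, but it deserves an explicit sentence. With those two points filled in, your argument is sound and is the natural completion of what the paper leaves implicit.
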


Finally, we may state our main theorem:

\begin{theorem}
\label{t3} The number of non-(graded) isomorphic $(1,2)$-symmetric $G=%
\mathbb{Z}_{n_{1}}\times \cdots \times \mathbb{Z}_{n_{k}}$-graded twisted $%
\mathbb{C}$-algebras with structure constants taking values in a finite
subgroup $A\subset \mathbb{C}^{\ast }$ is given by the product: 
\begin{equation*}
\prod_{i=1}^{k}|R_{n_{i}}|^{|G|-(k+1)}\prod_{1\leq i<j\leq
k}|R_{n_{i}n_{j}}|,
\end{equation*}%
where $R_{n_{i}}$ denotes the set $\{\omega \in A~~:~~\omega ^{n_{i}}=1\}$.
\end{theorem}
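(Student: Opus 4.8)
The plan is to turn the problem into a counting problem and then induct on the number of factors $k$. First, by the remark preceding the statement (the exact analogue of Theorem~\ref{tii}), two $(1,2)$-symmetric $G$-graded twisted $\mathbb{C}$-algebras are graded-isomorphic if and only if their structure constants, taken with respect to standard bases, coincide; so I must count the functions $C\colon G\times G\to A$ that can occur as the structure constant of such an algebra in a standard basis. By Theorem~\ref{t2}, such a $C$ is determined by the values $C(g\cdot a_k^{s},a_i)$, $C(g\cdot a_k^{s},a_k)$ and $C(a_t,a_j)$ (for $g\in G_1:=\mathbb{Z}_{n_1}\times\cdots\times\mathbb{Z}_{n_{k-1}}$, $1\le i,t,j\le k-1$, $0\le s\le n_k-1$), subject to the identities stated there, and by the existence theorem just before the statement, every assignment obeying those identities is realized by an actual $(1,2)$-symmetric algebra; since $C$ can moreover be recovered uniquely from this data, the number sought equals the number of admissible assignments.

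I would compute this number by induction on $k$. For $k=1$ it is the cyclic count of \cite{jwn}, namely $|R_{n_1}|^{n_1-2}=|R_{n_1}|^{|G|-(k+1)}$ (the product over pairs being empty), which establishes the base case. For the inductive step, write $G=G_1\times\mathbb{Z}_{n_k}$. By the remark that a standard basis restricts to a standard basis on each subproduct, the block $C|_{G_1\times G_1}$ is itself the structure constant, in a standard basis, of a $(1,2)$-symmetric $G_1$-graded twisted algebra, and conversely any such block extends; so by the inductive hypothesis the admissible choices of this block number
\[
N_{k-1}=\prod_{i=1}^{k-1}|R_{n_i}|^{|G_1|-k}\prod_{1\le i<j\le k-1}|R_{n_in_j}|.
\]
Consequently the answer will be $N_{k-1}\cdot E_k$, where $E_k$ is the number of admissible extensions of a fixed block to the remaining ``mixed'' data, namely the constants $C(g\cdot a_k^{s},a_k)$ and $C(g\cdot a_k^{s},a_i)$ involving the new factor.

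Computing $E_k$ is the heart of the argument, and here I would follow the bookkeeping of the two-factor theorem. The $k-1$ constants $C(a_i,a_k)$ are constrained only by $C(a_i,a_k)^{n_in_k}=1$ (see Theorem~\ref{t1}), contributing $\prod_{i=1}^{k-1}|R_{n_in_k}|$. The pure constants $C(a_k^{s},a_k)$ with $2\le s\le n_k-1$ are constrained only by having order dividing $n_k$, the values $C(1,a_k)=C(a_k,a_k)=1$ being forced. For the remaining pairs $(g,s)$, the formulas of Theorem~\ref{t2} together with Lemma~\ref{l2} and the Remark following it express $C(g\cdot a_k^{s},a_k)$, and each $C(g\cdot a_k^{s},a_i)$, in terms of data already fixed, up to a single free root of unity whose order divides $n_k$, respectively $n_i$ (the latter depending on whether $r_i=n_i-1$); the values attached to $s=0$ are part of the $G_1$-block and are not recounted. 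Tallying these free roots of unity over all admissible triples $(g,s,i)$ — splitting into the cases $s=n_k-1$ versus $s\neq n_k-1$, and, for each $i$, $r_i=n_i-1$ versus $r_i\neq n_i-1$, exactly as for two factors, but now with $g=a_1^{r_1}\cdots a_{k-1}^{r_{k-1}}$ ranging over $G_1$ — should yield
\[
E_k=|R_{n_k}|^{|G|-(k+1)}\prod_{i=1}^{k-1}|R_{n_i}|^{(n_k-1)|G_1|-1}\prod_{i=1}^{k-1}|R_{n_in_k}|.
\]
Multiplying by $N_{k-1}$ and using $|G|=n_k|G_1|$ and $(|G|-(k+1))-(|G_1|-k)=(n_k-1)|G_1|-1$ then gives the claimed product; as a check, for $k=2$ with $n_1=m$, $n_2=n$ this reads $|R_m|^{mn-3}|R_n|^{mn-3}|R_{mn}|$, the two-factor theorem proved earlier.

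The step I expect to be the main obstacle is the evaluation of $E_k$: identifying exactly which triples $(g,s,i)$ contribute an independent free root of unity, verifying that the identities of Theorem~\ref{t2} do not secretly link these roots to one another, to the $C(a_i,a_k)$'s, or to the $G_1$-block (so that nothing is over- or under-counted), and then carrying out the resulting exponent arithmetic. This is the same case analysis that appears in the two-factor computation, but now run with the $k-1$ exponents $r_1,\dots,r_{k-1}$ of $g$, which is where the real combinatorial work lies; the induction serves only to isolate the single new cyclic factor at each step.
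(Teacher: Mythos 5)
Your proposal is correct and follows essentially the route the paper intends: the paper in fact states Theorem \ref{t3} with no proof at all, leaving the reader to combine Theorem \ref{t2}, the rigidity and existence statements preceding it, and the parameter count from the two-factor case, and your inductive organization reproduces exactly that count — the exponent arithmetic $(|G|-(k+1))-(|G_{1}|-k)=(n_{k}-1)|G_{1}|-1$ checks out against the $\mathbb{Z}_{m}\times \mathbb{Z}_{n}$ theorem, and your tally of which constants are forced ($C(1,a_{k})$, $C(a_{k},a_{k})$, $C(a_{k},a_{i})$) versus free up to a root of unity matches the two-factor bookkeeping. The one step you flag but do not carry out — verifying that the roots of unity $\omega _{g,s}$ are genuinely independent parameters — is a gap you share with the paper, which also leaves that verification implicit.
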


We have given a classification of a concrete family of twisted group
algebras satisfying a particular property in their function $r$. The diverse
properties that can be imposed to the function $r,$ and to the function $q$,
lead to further families of algebras which deserve as well classification
endeavors. In a forth coming publication we will tackle the relation of
these properties and more standard families of algebras such as the diverse
sorts of Lie-admissible algebras. In this sense, the approach of studying
twisted group algebras via the properties of their $r$ and $q$ functions,
provide novel powerful tools to classify concrete subclasses of standard
algebras such as left-symmetric, pre-Lie, anti-flexible, and further
varieties of algebras.

\end{document}